\journal{???}
\newtheorem{theorem}{Theorem}
\newtheorem{lemma}{Lemma}
\newtheorem{corollary}[theorem]{Corollary}
\newtheorem{proposition}[theorem]{Proposition}
\begin{document}

\begin{frontmatter}

\title{Structural and computational results on platypus graphs}

\author[gent]{Jan Goedgebeur\fnref{fwo}}
\ead{Jan.Goedgebeur@UGent.be}

\author[gent]{Addie Neyt}
\ead{Addie.Neyt@UGent.be}

\author[gent]{Carol T.\ Zamfirescu\fnref{fwo}}
\ead{czamfirescu@gmail.com}

\address[gent]{Department of Applied Mathematics, Computer Science \& Statistics\\
  Ghent University\\
Krijgslaan 281-S9, \\9000 Ghent, Belgium\\ }

\fntext[fwo]{Supported by a Postdoctoral Fellowship of the Research Foundation Flanders (FWO).}

\begin{abstract}
A \emph{platypus graph} is a non-hamiltonian graph for which every vertex-deleted subgraph is traceable. They are closely related to families of graphs satisfying interesting conditions regarding longest paths and longest cycles, for instance hypohamiltonian, leaf-stable, and maximally non-hamiltonian graphs.

In this paper, we first investigate cubic platypus graphs, covering all orders for which such graphs exist: in the general and polyhedral case as well as for snarks.
We then present (not necessarily cubic) platypus graphs of girth up to 16---whereas no hypohamiltonian graphs of girth greater than 7 are known---and study their maximum degree, generalising two theorems of Chartrand, Gould, and Kapoor.
Using computational methods, we determine the complete list of all non-isomorphic platypus graphs for various orders and girths.
Finally, we address two questions raised by the third author in [\emph{J. Graph Theory} \textbf{86} (2017) 223--243].
\end{abstract}

\begin{keyword}
Non-hamiltonian \sep traceable \sep hypohamiltonian \sep hypotraceable \sep cubic graph \sep girth \sep maximally non-hamiltonian graph \sep computations
\end{keyword}

\end{frontmatter}

\section{Introduction}

\noindent A graph is \emph{hamiltonian} (\emph{traceable}) if it contains a spanning cycle (spanning path), which is called a \emph{hamiltonian cycle} (\emph{hamiltonian path}). A \emph{platypus graph}---or \emph{platypus} for short---is a non-hamiltonian graph in which every vertex-deleted subgraph is traceable~\cite{Za17}. We shall only consider graphs with at least three vertices and thus ignore the trivial platypus $K_2$. A graph is \emph{hypohamiltonian} (\emph{hypotraceable}) if the graph itself is non-hamiltonian (non-traceable), yet every vertex-deleted subgraph is hamiltonian (traceable). Every hypohamiltonian or hypotraceable graph is a platypus, but not vice-versa. For a survey on hypohamiltonicity, see Holton and Sheehan's~\cite{HS93}. For further results on platypuses and their relationship to other classes of graphs such as maximally non-hamiltonian or leaf-stable graphs, see~\cite{Za17}. For a graph $G$, we denote by $g(G)$ its \emph{girth}, i.e.\ the length of a shortest cycle in $G$. For $S \subset V(G)$ we write $G[S]$ for the subgraph of $G$ induced by $S$. We call a graph \emph{cubic} or \emph{$3$-regular} if all of its vertices are cubic, i.e.\ of degree~3.

This article is organised as follows. In Section~\ref{sect:cubic} we discuss cubic platypuses. Starting from Petersen's graph and Tietze's graph, we construct an infinite family of cubic platypuses, characterising all orders for which such graphs exist. We then investigate cubic \emph{polyhedral}---i.e.\ planar and 3-connected---platypuses, again characterising all orders for which such graphs exist. It turns out that the famous Lederberg-Bos\'ak-Barnette graph and the five other structurally similar smallest counterexamples to Tait's hamiltonian graph conjecture are the smallest cubic polyhedral platypuses. Finally, we also determine all orders for which platypus snarks exist.

In Section~\ref{sect:general} we go on to study the girth of (not necessarily cubic) platypuses, using a construction based on generalised Petersen graphs. Although no hypohamiltonian graph of girth greater than~7 is known, Coxeter's graph being the smallest hypohamiltonian graph of girth~7 (see~\cite{GZ17}), we find platypuses of all girths up to and including 16. In Section~\ref{sect:maxdeg}, we generalise two results of Chartrand, Gould, and Kapoor~\cite{CGK79}: we show that for a given $n$-vertex platypus, (i) its maximum degree is at most $n - 4$, and (ii) its set of vertices of degree $n-4$ induces a complete graph. Note that, combined with a previous result of the third author~\cite{Za17}, (i) implies that a maximally non-hamiltonian graph of order $n$ cannot have maximum degree $n-2$ or $n-3$. In Section~\ref{sect:computations} we present the complete lists of platypuses for various orders and lower bounds on the girth we obtained using our generation algorithm for platypuses.

The paper ends with a discussion of Questions 1 and 4 from~\cite[Section~7]{Za17}: we answer the former negatively and address the latter by giving the first non-trivial lower bound and improving the upper bound for the order of the smallest polyhedral platypus, as well as determining the orders of the smallest polyhedral platypuses of girth 4 and 5. Finally, we show that there exists a polyhedral platypus of order $n$ for every $n \ge 21$, strengthening~\cite[Theorem 5.3]{Za17}.

\section{Cubic platypuses}
\label{sect:cubic}

\begin{lemma} \label{lemma:transformation}
Let $G$ be a platypus containing a triangle $v_1v_2v_3$ whose vertices are cubic. Adding new vertices $v'_1, v'_2$ to $V(G)$, we have that
$$T(G) = \left(V(G) \cup \{ v'_1, v'_2 \}, (E(G) \setminus \{ v_1v_3, v_2v_3\} ) \cup \{ v_1v'_1, v_2v'_2, v'_1v'_2, v'_1v_3, v'_2v_3 \} \right)$$
is a platypus as well. The transformation $T$ preserves $3$-regularity, planarity, and $3$-connectedness.
\end{lemma}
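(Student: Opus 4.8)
The plan is to settle the three structural claims by inspecting the local modification, and to prove non-hamiltonicity together with the traceability of every vertex-deleted subgraph of $T(G)$ by transporting Hamiltonian cycles and paths between $G$ and $T(G)$ across the altered region. Write $H$ for the subgraph of $T(G)$ induced by $\{v_1,v_2,v_3,v_1',v_2'\}$: it is the $5$-cycle $v_1v_1'v_3v_2'v_2$ together with the chord $v_1'v_2'$, and since $v_1,v_2,v_3$ are cubic in $G$ each of them keeps exactly one edge leaving $H$, to its remaining $G$-neighbour $a_1$, $a_2$, $a_3$. Every vertex of $T(G)$, new or old, has degree $3$, so $T$ preserves $3$-regularity; since $H$ embeds in a disc with $v_1,v_2,v_3$ on the boundary in the cyclic order they had on the triangle while the rest of a plane embedding of $G$ is untouched, $T$ preserves planarity; and for $3$-connectedness one checks, using that $H$ links $v_1,v_2,v_3$ at least as robustly as the triangle did and that $G$ is $3$-connected, that $T(G)$ admits no cut of size at most $2$, which is routine.

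For non-hamiltonicity, suppose for contradiction that $C$ is a Hamiltonian cycle of $T(G)$, and distinguish whether $v_1'v_2'\in C$. If $v_1'v_2'\notin C$, then $C$ must use both other edges at $v_1'$ and both other edges at $v_2'$, hence contains the path $v_1v_1'v_3v_2'v_2$ and no other gadget edge; replacing that path by $v_1v_3v_2$---legitimate since $v_1v_3,v_2v_3\in E(G)$---and dropping $v_1',v_2'$ turns $C$ into a Hamiltonian cycle of $G$, a contradiction. If $v_1'v_2'\in C$, then $C$ uses at most one further edge at each of $v_1'$ and $v_2'$, and a short inspection of the four possibilities shows that either $v_3$ is left with fewer than two usable edges, or the gadget part of $C$ is the triangle $v_1'v_2'v_3$ (impossible for a spanning cycle), or $C$ contains one of two symmetric paths running from $a_3$ through the gadget back into $G$ that can be shortcut along $v_1v_3$ or along $v_2v_3$, again yielding a Hamiltonian cycle of $G$. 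Hence $T(G)$ is non-hamiltonian.

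It remains to show $T(G)-w$ is traceable for every vertex $w$, which is the bulk of the work. I would split into the cases $w\in V(G)\setminus\{v_1,v_2,v_3\}$, $w=v_3$, $w\in\{v_1,v_2\}$, and $w\in\{v_1',v_2'\}$, using the $v_1\!\leftrightarrow\!v_2$, $v_1'\!\leftrightarrow\!v_2'$ symmetry of $H$, and in each case start from a Hamiltonian path $P$ of a suitable traceable subgraph of $G$---respectively $G-w$, $G-v_3$, $G-v_1$, and $G-v_2$. One then classifies how $P$ meets the part of the triangle surviving the deletion (a single edge with two legs, or, when $w$ is external, the whole triangle with three legs): $P$ either runs straight through it, or has an endpoint in it, or---in the three-leg case---uses a single triangle edge while the third leg is used elsewhere along $P$. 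For each configuration one exhibits a Hamiltonian path of the relevant piece of $H$ with exactly the same boundary behaviour, allowing $v_1'$ or $v_2'$ to serve as an endpoint of the new path where convenient, and splices it into $P$; the Hamiltonian paths of $H$ and of its one-vertex-deleted subgraphs that are needed exist between appropriate pairs of their vertices and can be written down explicitly. Performing this enumeration cleanly is the main obstacle: it is elementary but involves many small configurations, and one must additionally dispose of the degenerate situations in which two of $a_1,a_2,a_3$ coincide or $w$ equals one of them---these only delete some configurations or are absorbed by the same substitutions, the sole configuration they could genuinely block being one that would force $|V(G)|\le 4$, i.e.\ $G=K_4$, which is hamiltonian and hence not a platypus.
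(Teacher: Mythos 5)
Your proposal is correct and follows essentially the same route as the paper: a local case analysis showing that a hamiltonian cycle of $T(G)$ would collapse to one of $G$, and a splicing argument that converts hamiltonian paths of $G-v$ (respectively of $G-v_2$ and $G-v_1$ for the new vertices $v_1'$ and $v_2'$) into hamiltonian paths of the vertex-deleted subgraphs of $T(G)$ by replacing the trace of the triangle with a matching path through the gadget. The paper simply carries out explicitly the small enumeration of configurations that you defer (six cases for how the path meets the triangle, plus the forced edge $v_1v_3$ in a hamiltonian path of $G-v_2$), and, like you, it leaves the regularity, planarity, and $3$-connectedness claims to inspection.
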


\begin{proof}
See Figure~\ref{fig:transformation} for an illustration of the transformation $T$. In the remainder of this proof we consider $G - \{ v_1v_3, v_2v_3\}$ as a subgraph of $T(G)$.

\begin{figure}[h!tb]
	\centering
	\includegraphics[width=0.5\textwidth]{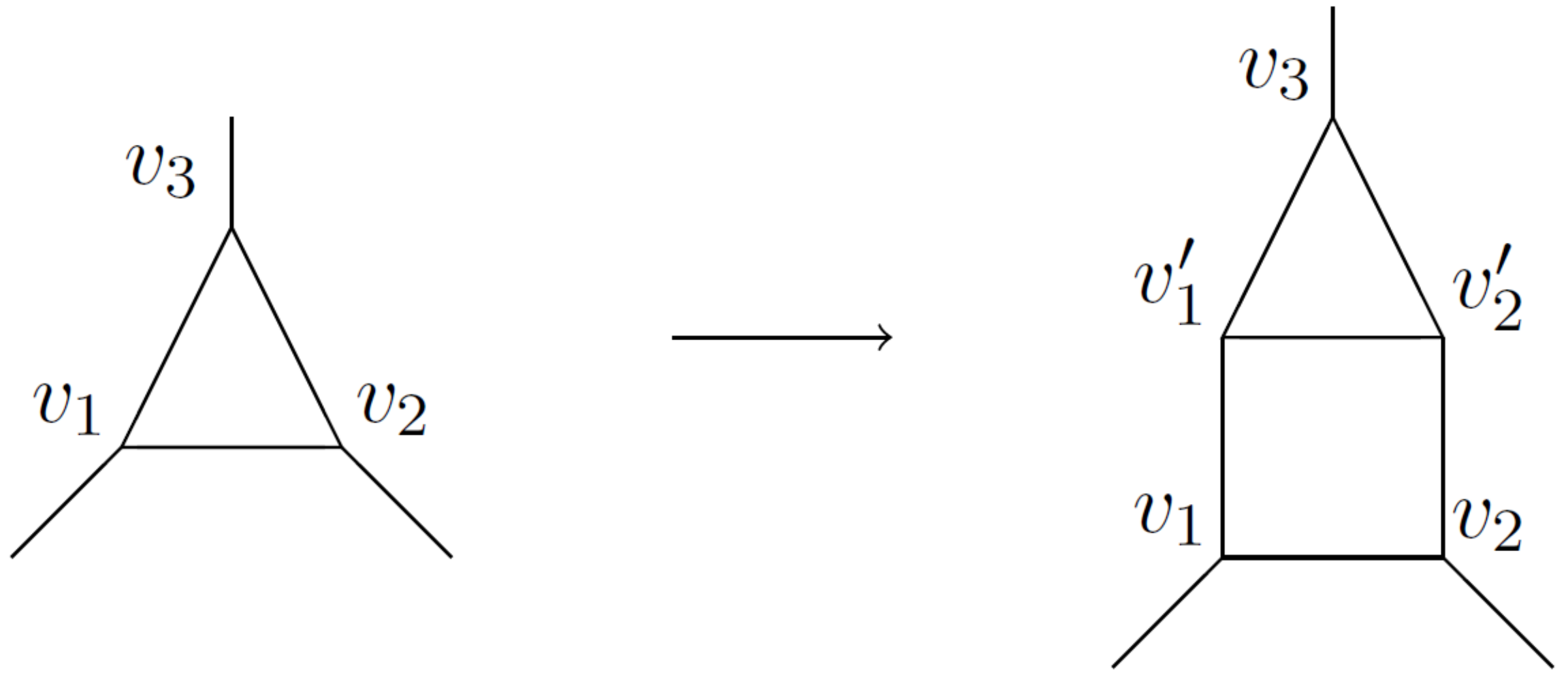}
	\caption{The transformation $T$.}
	\label{fig:transformation}
\end{figure}

Assume $T(G)$ contains a hamiltonian cycle ${\mathfrak h}$. Excluding symmetric cases, ${\mathfrak h} \cap T(G)[\{ v_1, v_2, v_3, v'_1, v'_2 \}]$ is $v_1v_2v'_2v'_1v_3$ or $v_1v'_1v_3v'_2v_2$. Replacing the former path with $v_1v_2v_3$ and the latter one with $v_1v_3v_2$ we obtain a hamiltonian cycle in $G$, a contradiction.

Let $v \in V(G)$. Since $G$ is a platypus, $G - v$ contains a hamiltonian path ${\mathfrak p}$. As $v_1$, $v_2$, and $v_3$ are cubic in $G$, the path ${\mathfrak p}$ contains at least one and at most two of the edges $v_1v_2, v_2v_3, v_3v_1$. There are six essentially different cases for ${\mathfrak p} \cap G[\{ v_1, v_2, v_3 \}]$: $v_1v_2$ (when $v = v_3$), $v_1v_2 + v_3$, $v_1v_3$ (when $v = v_2$), $v_1v_3 + v_2$, $v_1v_2v_3$, and $v_1v_3v_2$. In $T(G)$, replace these with $v_1v'_1v'_2v_2$, $v_1v'_1v'_2v_2 + v_3$, $v_1v'_1v'_2v_3$, $v_1v'_1v'_2v_3 + v_2$, $v_1v_2v'_2v'_1v_3$, and $v_1v'_1v_3v'_2v_2$, respectively. Thus, we obtain a hamiltonian path in $T(G) - v$ for every $v \in V(T(G)) \setminus \{ v'_1, v'_2 \}$.

Consider the hamiltonian path ${\mathfrak q}$ of $G - v_2$. Necessarily $v_1v_3 \in E({\mathfrak q})$. Let ${\mathfrak q}' = {\mathfrak q} - v_1v_3$ lie in $T(G)$. Now ${\mathfrak q}' \cup v_1v_2v'_2v_3$ is a hamiltonian path in $T(G) - v'_1$. For $v'_2$ the same idea is used.
\end{proof}

A graph $G$ is \emph{maximally non-hamiltonian} if $G$ is non-hamiltonian and for every pair of non-adjacent vertices $v$ and $w$, the graph $G$ contains a hamiltonian path between $v$ and $w$. We need the following.

\begin{lemma}[Zamfirescu~\cite{Za17}]\label{lemma:maxdeg}
Let $G$ be a maximally non-hamiltonian graph. Then $G$ is a platypus if and only if $\Delta(G) < |V(G)| - 1$.
\end{lemma}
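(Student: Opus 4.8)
The plan is to treat the two implications of the biconditional separately; both are short.

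\emph{If $G$ is a platypus, then $\Delta(G) < |V(G)| - 1$.} Write $n = |V(G)|$ and suppose, for contradiction, that some vertex $u$ has degree $n-1$, i.e.\ $u$ is adjacent to every other vertex. Since $G$ is a platypus, $G - u$ is traceable; let $P$ be a hamiltonian path of $G - u$ with end-vertices $a$ and $b$. As $u$ is adjacent to both $a$ and $b$ in $G$, adjoining the edges $ua$ and $ub$ to $P$ closes it into a hamiltonian cycle of $G$, contradicting non-hamiltonicity. Hence $\Delta(G) \le n - 2 < n-1$. Note that this half uses only that $G$ is a platypus, not maximal non-hamiltonicity, and is in any case superseded by the sharper bound $\Delta(G) \le n-4$ obtained later in the paper.

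\emph{If $G$ is maximally non-hamiltonian and $\Delta(G) < |V(G)| - 1$, then $G$ is a platypus.} Again set $n = |V(G)|$. By definition $G$ is non-hamiltonian, so it remains only to show that $G - v$ is traceable for every $v \in V(G)$. Fix such a $v$. Since $\deg_G(v) \le \Delta(G) \le n - 2$, there is a vertex $w \ne v$ that is non-adjacent to $v$. Because $G$ is maximally non-hamiltonian, it contains a hamiltonian path $P$ between the non-adjacent vertices $v$ and $w$; deleting the end-vertex $v$ from $P$ leaves a path through all $n-1$ vertices of $G - v$, which is the desired hamiltonian path. Thus every vertex-deleted subgraph of $G$ is traceable and $G$ is a platypus.

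I do not expect a genuine obstacle here. The only two points that need to be stated cleanly are the elementary observation that removing an end-vertex of a hamiltonian path of $G$ yields a hamiltonian path of the corresponding vertex-deleted subgraph (used in both directions), and the recognition that the hypothesis $\Delta(G) < |V(G)| - 1$ is precisely what guarantees, for each vertex $v$, the existence of a non-neighbour of $v$ to which maximal non-hamiltonicity supplies such a path.
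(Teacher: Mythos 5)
Your proof is correct, and both directions are the natural arguments: the forward implication only needs that a dominating vertex would close a hamiltonian path of $G-u$ into a hamiltonian cycle, and the converse uses maximal non-hamiltonicity to supply, for each $v$, a hamiltonian path ending at $v$ (via a non-neighbour guaranteed by $\Delta(G)<|V(G)|-1$). The paper itself gives no proof here---it imports the lemma from~\cite{Za17}---so there is nothing to contrast with; your argument is essentially the one from that reference.
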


We briefly expand on this: there exist infinitely many maximally non-hamiltonian graphs which are not platypuses (identify a vertex of $K_p$ with a vertex of $K_q$) and infinitely many platypuses which are not maximally non-hamiltonian (consider the dotted prisms defined in~\cite{Za17} and discussed in Section~\ref{sect:general}). However, by Lemma~\ref{lemma:maxdeg} every maximally non-hamiltonian graph $G$ of maximum degree less than $|V(G)| - 1$ \emph{is} a platypus.

\begin{theorem}
There is a cubic platypus of order $n$ if and only if $n$ is even and $n \ge 10$. The smallest cubic platypus is the Petersen graph.
\end{theorem}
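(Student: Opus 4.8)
The statement has three ingredients: a parity/lower-bound obstruction (necessity), an explicit small example (the Petersen graph realizes $n=10$), and a construction covering all larger even orders (sufficiency). Let me think about each.

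**Necessity: $n$ must be even.** Every cubic graph has an even number of vertices, since $\sum \deg = 2|E|$ forces $3n$ to be even. So the parity condition is immediate and needs only one line.

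**Necessity: $n \geq 10$.** Here I need to rule out cubic platypuses on $4, 6, 8$ vertices. The options: either cite known enumeration of small cubic graphs, or argue directly. $K_4$ is hamiltonian, so not a platypus. For $n=6$: the connected cubic graphs are $K_{3,3}$ and the prism $K_3 \times K_2$; both are hamiltonian. For $n=8$: there are five connected cubic graphs on 8 vertices; I'd want to check each is either hamiltonian or has a non-traceable vertex-deleted subgraph. Actually, a cleaner route: a platypus is non-hamiltonian, and small non-hamiltonian cubic graphs are rare and easily listed (the cube, the prism over a hexagon, etc., are hamiltonian). Better yet, I can invoke a lower-bound lemma — the paper's Lemma on maximum degree gives $\Delta(G) \leq n-4$, hence for cubic graphs $3 \leq n-4$, i.e. $n \geq 7$, and with parity $n \geq 8$; then I only need to eliminate $n = 8$ by hand or by computer (the paper has computational machinery). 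Alternatively just assert: an exhaustive check of the connected cubic graphs of order at most $8$ shows none is a platypus — this is the kind of small verification the paper's computations section is set up for. I'll phrase necessity via the $n-4$ bound plus a finite check for $n=8$.

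**Sufficiency.** The Petersen graph $P$ is a well-known hypohamiltonian graph, hence a platypus, of order $10$; it is cubic. So $n=10$ is realized, and it is the smallest. Similarly Tietze's graph is a cubic platypus of order $12$. Now I invoke Lemma~\ref{lemma:transformation}: the transformation $T$ adds exactly two vertices, preserves $3$-regularity, and requires a triangle all of whose vertices are cubic. The Petersen graph has no triangle (girth $5$), so I cannot start from it for $T$; but Tietze's graph does contain a triangle of cubic vertices, so applying $T$ repeatedly to Tietze's graph produces cubic platypuses of every even order $\geq 12$ — provided $T(G)$ still contains a triangle with cubic vertices so the process can be iterated. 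Looking at the definition of $T$: it destroys the triangle $v_1v_2v_3$ but creates the triangle on $\{v_1', v_2', v_3\}$... wait, $v_1'v_2'$ is an edge, $v_1'v_3$ is an edge, $v_2'v_3$ is an edge — yes, $v_1'v_2'v_3$ is a new triangle, and $v_1', v_2'$ are cubic by construction while $v_3$ stays cubic. So the new triangle again has all cubic vertices, and $T$ can be iterated indefinitely. Combined with the Petersen graph for $n=10$, this gives all even $n \geq 10$.

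**The main obstacle.** The genuinely delicate point is the lower bound $n \geq 10$: ruling out order $8$. Everything else is either a citation (Petersen/Tietze are platypuses) or a routine consequence of Lemma~\ref{lemma:transformation}. I would handle $n = 8$ by noting the connected cubic graphs on $8$ vertices number five, listing their hamiltonicity status, and checking the non-hamiltonian one(s) fail the traceability-of-vertex-deleted-subgraphs condition — or simply defer to the computational enumeration described later in the paper. I also need to double-check the iterability claim above (that $T$ regenerates a cubic triangle) carefully against the precise edge set in Lemma~\ref{lemma:transformation}, since the whole induction hinges on it; this is the one spot where a misreading of the construction would break the argument.
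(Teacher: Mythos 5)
Your proposal follows essentially the same route as the paper: Petersen for $n=10$, Tietze for $n=12$, and iterated application of the transformation $T$ of Lemma~\ref{lemma:transformation} to Tietze's graph for all larger even orders; your check that $T$ regenerates a triangle $v_1'v_2'v_3$ on cubic vertices is exactly the point the iteration hinges on, and it is correct. The one place where you take a detour is the lower bound: the paper simply cites the known fact that no platypus has fewer than $9$ vertices, which together with the parity of cubic graphs immediately yields $n\ge 10$ and disposes of the order-$8$ case you would otherwise have to settle by a finite check. Also, the paper certifies both Petersen and Tietze uniformly as maximally non-hamiltonian graphs with $\Delta(G)<|V(G)|-1$ via Lemma~\ref{lemma:maxdeg}, rather than invoking hypohamiltonicity for one and leaving the other (``similarly'') unjustified; Tietze's graph is in fact not hypohamiltonian, so some such argument is needed there.
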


\begin{proof} The graphs of Petersen and Tietze, depicted in Figure~\ref{fig:petersen_tietze}, are cubic maximally non-hamiltonian graphs~\cite{CE83} of order 10 and 12, respectively.

\begin{figure}[h!tb]
	\centering
	\includegraphics[width=0.5\textwidth]{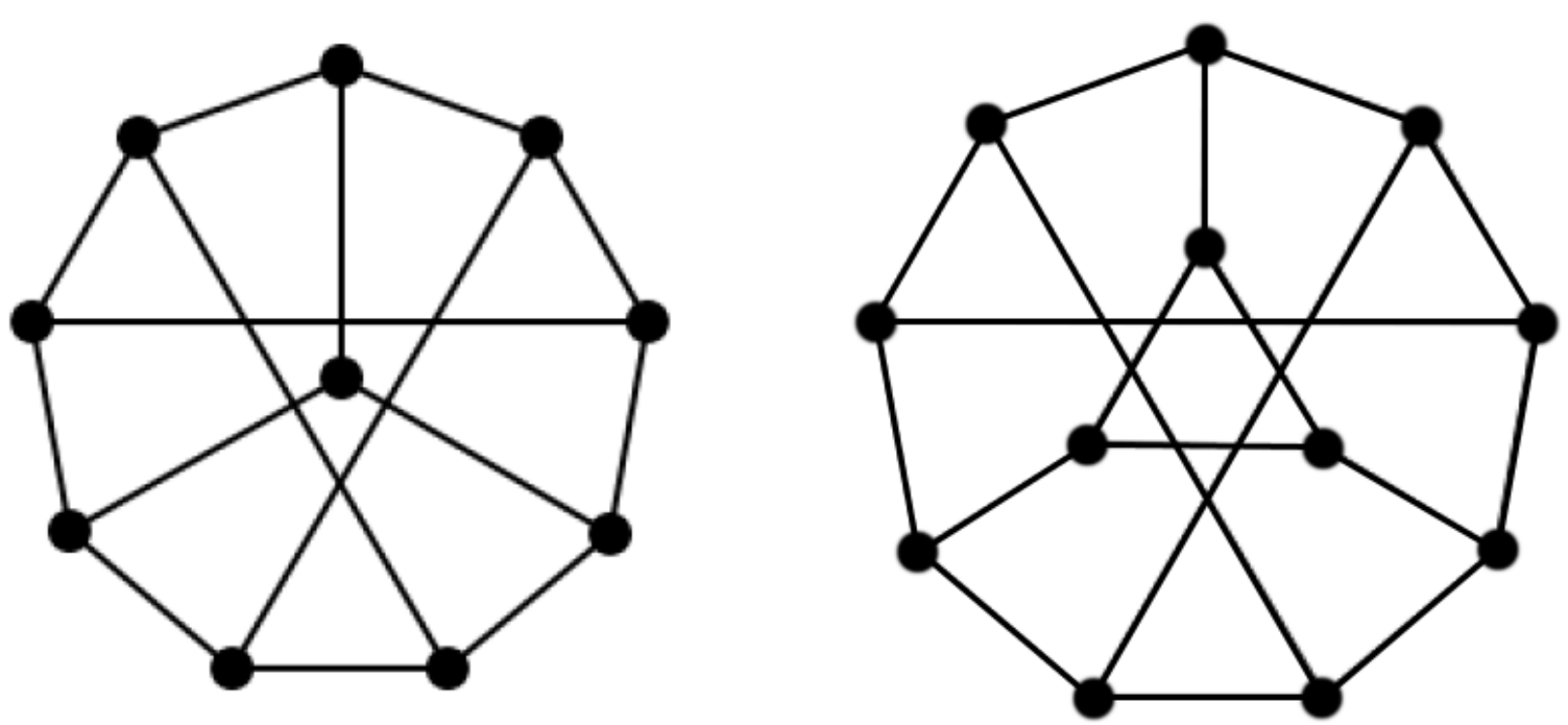}
	\caption{The Petersen graph (left-hand side) and the Tietze graph (right-hand side).}
	\label{fig:petersen_tietze}
\end{figure}

By Lemma~\ref{lemma:maxdeg} they are platypuses. Tietze's graph $G$ contains a triangle (see Figure~\ref{fig:petersen_tietze}), so we may apply transformation $T$ from Lemma~\ref{lemma:transformation}, obtaining a graph $T(G)$ that is a cubic platypus of order 14. Since $T(G)$ contains a triangle as well, we may iterate ad infinitum and obtain the infinite family of graphs $\{ T^k(G) \}_{k \ge 1}$ where $|V(T^k(G))| = 12 + 2k$. Thus, together with Petersen's graph and Tietze's graph, the first part of the theorem is shown.

The minimality of the Petersen graph follows from a result of Van Cleemput and the third author~\cite{Za17} (namely that there are no platypuses on fewer than 9 vertices---we ignore $K_2$), but can also be verified by inspecting Table~\ref{table:counts_platypuses} in Section~\ref{sect:computations}.
\end{proof}

Observe that we obtain the Tietze graph by replacing a vertex in the Petersen graph with a triangle~\cite{CE83}. One could wonder if we can always get a platypus in this manner, i.e.\ by replacing a cubic vertex with a triangle. In general, this is not the case: consider hypotraceable graphs. Whether in every \emph{traceable} platypus this transformation indeed yields again a platypus is unknown. 


\bigskip

We now turn our attention to cubic polyhedral platypuses.

\begin{theorem}\label{theorem:smallest_cubic_platypus}
There exists a cubic polyhedral platypus of order $n$ if and only if $n$ is even and $n \ge 38$. Furthermore, all six smallest non-hamiltonian cubic polyhedral graphs from~\cite{HM88}---amongst which the Lederberg-Bos\'ak-Barnette graph---are platypuses.
\end{theorem}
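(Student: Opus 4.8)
The plan is to establish the theorem in two halves, exactly mirroring the structure of the previous theorem on general cubic platypuses. For the ``if'' direction together with the statement about the six smallest non-hamiltonian cubic polyhedral graphs, I would first verify computationally (or by citing the generation results announced in Section~\ref{sect:computations}) that each of the six graphs from~\cite{HM88}---in particular the Lederberg-Bos\'ak-Barnette graph---is indeed a platypus: each is non-hamiltonian by construction, so one only needs to check that every vertex-deleted subgraph is traceable. This is a finite check on graphs of order~38. Since these graphs are planar and $3$-connected, they are polyhedral, giving a cubic polyhedral platypus of order~38. To obtain every even $n \ge 38$, I would locate a triangle with cubic vertices in one of these order-$38$ graphs (the Lederberg-Bos\'ak-Barnette graph contains triangles) and apply the transformation $T$ of Lemma~\ref{lemma:transformation} iteratively; since $T$ preserves $3$-regularity, planarity, and $3$-connectedness, and since $T(G)$ again contains a triangle with cubic vertices whenever $G$ does (the newly created triangle $v_1'v_2'v_3$ consists of cubic vertices), the family $\{T^k(G)\}_{k \ge 0}$ supplies a cubic polyhedral platypus of every order $38 + 2k$.

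For the ``only if'' direction I must show two things: that no cubic polyhedral platypus has odd order, and that none has order at most~36. Oddness is immediate---every cubic graph has even order by the handshake lemma---so the real content is the lower bound of~38. Here I would invoke the exhaustive computational results: the smallest non-hamiltonian cubic polyhedral graphs have order~38 (this is the theorem of Holton and McKay~\cite{HM88}), so a fortiori there is no cubic polyhedral platypus of smaller order, since every platypus is in particular non-hamiltonian. Combined with the parity constraint, this yields the claimed characterisation.

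The main obstacle is the verification that the six order-$38$ graphs of~\cite{HM88} are genuinely platypuses rather than merely non-hamiltonian cubic polyhedral graphs; this is not automatic and must be checked vertex by vertex. I expect to handle it by the same computational machinery used elsewhere in the paper (referencing Table~\ref{table:counts_platypuses} and the generation algorithm of Section~\ref{sect:computations}), noting that the check is routine once the software is in place. A secondary, smaller point requiring care is confirming that the specific triangle chosen in the order-$38$ graph has all three vertices cubic---which is automatic here since the graph is cubic---so that Lemma~\ref{lemma:transformation} applies and the induction on $k$ goes through without interruption.
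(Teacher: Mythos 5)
Your overall architecture is right (computational check of the six order-$38$ graphs, parity plus the Holton--McKay minimality for the ``only if'' direction, and an inductive triangle-expansion for larger even orders), but there is a concrete gap in the inductive step: the Lederberg--Bos\'ak--Barnette graph does \emph{not} contain a triangle. All six smallest non-hamiltonian cubic polyhedral graphs have girth $4$ (the paper notes this explicitly right after the theorem), so your plan to ``locate a triangle with cubic vertices in one of these order-$38$ graphs'' and immediately apply Lemma~\ref{lemma:transformation} cannot get started.

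The paper bridges this by first performing a different operation: it chooses a vertex $w$ of the Lederberg--Bos\'ak--Barnette graph $L$ such that $L-w$ is hamiltonian (the existence of such a $w$ is a separate verification, done in~\cite{Ne17}) and replaces $w$ by a triangle. This is not Lemma~\ref{lemma:transformation} but a related construction whose correctness needs its own (short) argument; it yields a cubic polyhedral platypus $L'$ on $40$ vertices which \emph{does} contain a triangle. Only from $L'$ onward does the iteration of Lemma~\ref{lemma:transformation} take over, producing orders $40, 42, 44, \dots$, with order $38$ covered by the six base graphs themselves. To repair your proof you need this intermediate vertex-to-triangle step (including the hypothesis that $L-w$ be hamiltonian, which is what makes the new graph non-hamiltonian rather than the blanket assumption that $L$ is cubic), or some other way to produce a triangle-containing cubic polyhedral platypus of order $40$.
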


\begin{proof}
Holton and McKay~\cite{HM88} showed that the smallest non-hamiltonian cubic polyhedral graphs have 38 vertices and that there are exactly six such graphs of that order. Using two independent computer programs to test for platypusness, we verified the second part of the above statement. The programs are straightforward and verify that for every vertex $v$ of the input graph $G$, the vertex-deleted subgraph $G-v$ is traceable. Since these programs are very similar to the ones we used to test hypohamiltonicity in~\cite{GZ17}, we omit the details.

We now prove the first part of the theorem. Consider the Lederberg-Bos\'ak-Barnette graph $L$. We leave to the reader the simple proof of the fact that replacing a vertex $w$ in $L$ such that $L - w$ is hamiltonian with a triangle gives a cubic polyhedral platypus on 40~vertices, as the technique is very similar to what was used in the proof of Lemma~\ref{lemma:transformation}. The second author verified in her master's thesis~\cite{Ne17} that indeed such $w$ exist in $L$. We obtain a new graph $L'$. Since $L'$ contains a triangle, we may apply Lemma~\ref{lemma:transformation}. Iterating this, the proof is complete.
\end{proof}


The six graphs mentioned in Theorem~\ref{theorem:smallest_cubic_platypus} all have girth 4. In~\cite{HM88} it was also shown that the smallest non-hamiltonian cubic polyhedral graphs of girth 5 have 44 vertices and that there are exactly two such graphs of that order. Using two independent computer programs to test for platypusness, we verified the following.

\begin{theorem}
The smallest cubic polyhedral platypuses of girth $5$ have $44$ vertices and there are exactly two such graphs of that order (that is: the two smallest non-hamiltonian cubic polyhedral graphs of girth $5$ from~\cite{HM88}).
\end{theorem}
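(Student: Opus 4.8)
The plan is to reduce the theorem almost entirely to the enumeration result of Holton and McKay~\cite{HM88}, just as in the previous theorem. Holton and McKay proved that the smallest non-hamiltonian cubic polyhedral graphs of girth~$5$ have $44$ vertices and that there are exactly two such graphs on $44$ vertices; call them $H_1$ and $H_2$. Since every platypus is by definition non-hamiltonian, any cubic polyhedral platypus of girth~$5$ is in particular a non-hamiltonian cubic polyhedral graph of girth~$5$, so it has at least $44$ vertices, and on $44$ vertices the only candidates are $H_1$ and $H_2$. Hence the theorem follows once we check that both $H_1$ and $H_2$ are in fact platypuses, i.e.\ that for each of them every vertex-deleted subgraph is traceable.

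The second step is therefore the verification that $H_1$ and $H_2$ are platypuses. Following the pattern already established in the proof of Theorem~\ref{theorem:smallest_cubic_platypus}, I would run the two independent computer programs used there: for each graph $G \in \{H_1, H_2\}$ and each vertex $v \in V(G)$, test whether $G - v$ admits a hamiltonian path. Both graphs have only $44$ vertices and are cubic, so these are small instances and the traceability checks are routine; the use of two independently written programs guards against implementation errors. Together with the fact that $H_1$ and $H_2$ are non-hamiltonian (already part of the Holton--McKay result), this establishes that they are cubic polyhedral platypuses of girth~$5$.

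Combining the two steps: no cubic polyhedral platypus of girth~$5$ can have fewer than $44$ vertices, on $44$ vertices exactly the graphs $H_1$ and $H_2$ qualify, and both do qualify; this is precisely the statement. The only genuine obstacle I anticipate is the computational verification in the second step, and even that is mild---it is entirely analogous to the check already carried out for the six girth-$4$ graphs in Theorem~\ref{theorem:smallest_cubic_platypus}, and to the hypohamiltonicity tests of~\cite{GZ17}---so, as there, I would simply state that two independent programs confirmed the result and omit the routine details. No new construction or structural argument is needed, since unlike the earlier existence-for-all-large-$n$ statements this theorem asserts only a minimality and an exact count, both of which are inherited directly from~\cite{HM88} once platypusness of the two extremal graphs is confirmed.
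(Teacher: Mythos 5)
Your proposal is correct and follows essentially the same route as the paper: the paper likewise invokes Holton and McKay's result that the two smallest non-hamiltonian cubic polyhedral graphs of girth $5$ have $44$ vertices, and then verifies by two independent computer programs that both are platypuses. Nothing is missing.
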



A particularly interesting subclass of cubic graphs is the class of \textit{snarks}: cubic cyclically 4-edge-connected graphs with chromatic index~4 (i.e.\ four colours are required in any proper edge-colouring) and girth at least~5. In~\cite{GZ18} the first and the third author showed the following.

\begin{theorem}[Theorem~3.6 in~\cite{GZ18}]\label{thm:snark_order_hypo}
There exists a hypohamiltonian snark of order $n$ if and only if $n \in \{ 10, 18, 20, 22 \}$ or $n$ is even and $n \ge 26$.
\end{theorem}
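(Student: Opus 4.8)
The plan is to prove both directions of the equivalence, handling the finitely many small orders by computation and the tail $n \ge 26$ by a single gluing operation. For necessity (the ``only if''), I would first record that a snark, being cubic, has even order, and that it has girth at least $5$ and is cyclically $4$-edge-connected; in particular no snark exists below order $10$, and it is a classical enumeration fact that there is no snark at all of order $12$, $14$, or $16$ (the Petersen graph $P$ being the unique snark of order $10$, with the next snarks appearing only at order $18$). Hence the only even order outside the claimed spectrum that could conceivably carry a hypohamiltonian snark is $24$. For this single value I would invoke an exhaustive computer generation of all snarks of order $24$ together with a hypohamiltonicity test (the same type of program used elsewhere in this paper), checking that not one of them is hypohamiltonian. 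This settles necessity.

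For sufficiency (the ``if'') I would combine a short list of base cases with one construction. The base cases are hypohamiltonian snarks of orders $10$ (the Petersen graph), $18$, $20$, $22$, and $32$: the first four are the classical small examples (for instance a Blanu\v{s}a snark at order $18$), while the order-$32$ example is produced by the same generation machinery. The engine is Isaacs' \emph{dot product}: given a hypohamiltonian snark $G$, form $G \cdot P$ by deleting two independent edges of $G$ and two adjacent vertices of $P$, then joining the eight resulting half-edges in the standard crossing pattern, so that $|V(G \cdot P)| = |V(G)| + 8$. The key lemma to establish is that $G \cdot P$ is again a hypohamiltonian snark. With seeds at $10, 18, 20, 22, 32$ and this $+8$ step, the reachable even orders, sorted by residue modulo $8$, are $26, 34, 42, \dots$ (from $18$), $28, 36, \dots$ (from $20$), $30, 38, \dots$ (from $22$), and $32, 40, 48, \dots$ (from $32$); together with the seeds this is exactly $\{10, 18, 20, 22\} \cup \{n \text{ even} : n \ge 26\}$. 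Notably, the absent order $24 \equiv 0 \pmod{8}$ is precisely the one whose only potential dot-product predecessor would be an order-$16$ hypohamiltonian snark, which does not exist, so the hole at $24$ matches the necessity step rather than contradicting the construction.

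The content of the construction lemma splits into the snark property and hypohamiltonicity. That $G \cdot P$ is a snark (cyclically $4$-edge-connected, girth at least $5$, chromatic index $4$) is essentially Isaacs' theorem that the dot product of two snarks is a snark, so the real work is hypohamiltonicity. To see $G \cdot P$ is non-hamiltonian, I would use that the four joining edges form an edge cut separating the $G$-side from the $P$-side; a hypothetical hamiltonian cycle meets this cut in either $2$ or $4$ edges, and in each case the trace on the two sides contracts back to a hamiltonian cycle of $G$ or forces a spanning-cycle configuration in $P$, contradicting that both $G$ and $P$ are non-hamiltonian. For the vertex-deleted subgraphs, given a vertex $x$ one builds a hamiltonian cycle of $(G \cdot P) - x$ by combining a hamiltonian cycle of $G - v$ (respectively $P - u$) on the side containing $x$ with a suitable spanning path on the other side whose endpoints lie at the interface; hypohamiltonicity of $G$ and of $P$ supplies cycles with exactly the prescribed behaviour at the deleted edges and vertices needed for the two pieces to concatenate.

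The main obstacle will be the interface bookkeeping in this last step: one must ensure that the hamiltonian cycles guaranteed within each factor actually use, or avoid, the correct half-edges so that they merge into a single spanning cycle, and this requires a careful case analysis over how a cycle crosses the $4$-edge cut and over which of the two adjacent $P$-vertices is removed. Verifying the order-$32$ seed is the other non-routine ingredient, since, as the necessity argument shows, the residue class $0 \bmod 8$ cannot be reached from any smaller hypohamiltonian snark and so must be anchored by an explicitly exhibited example.
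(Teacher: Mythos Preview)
The paper does not prove this theorem at all: it is quoted verbatim as ``Theorem~3.6 in~\cite{GZ18}'' and used as a black box to deduce the platypus-snark spectrum, so there is no in-paper proof to compare your proposal against. Your sketch is, in outline, the same strategy actually used in~\cite{GZ18} (Fiorini-type dot product with the Petersen graph to gain $+8$ vertices while preserving hypohamiltonicity, combined with computer enumeration for the finitely many sporadic orders), so as a reconstruction of the cited result it is on target; but within the present paper nothing beyond the citation is expected or given.
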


Recall that every hypohamiltonian graph is also a platypus. The only order for which there are snarks but no hypohamiltonian snarks is 24. By taking the complete lists of snarks from~\cite{BGHM13} and testing which ones are platypuses---using two independent computer programs---we showed that there exists a platypus snark of order $24$. In fact, we obtained the following stronger result.

\begin{proposition}
Every snark on up to $30$ vertices is a platypus.
\end{proposition}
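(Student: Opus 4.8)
The statement ``every snark on up to $30$ vertices is a platypus'' is fundamentally a finite computational claim, and the plan is to verify it by exhaustive search over a known catalogue rather than by a structural argument. First I would invoke the complete lists of all non-isomorphic snarks of each order up to $30$; these are available from the work of Brinkmann, Goedgebeur, H\"agglund and Markstr\"om~\cite{BGHM13}, which is already cited in the excerpt for exactly this purpose. (Recall that snarks have girth at least $5$ and are cyclically $4$-edge-connected, so the number of such graphs is manageable even at order $30$.) The second ingredient is the pair of independent programs mentioned in the proof of Theorem~\ref{theorem:smallest_cubic_platypus}: for an input graph $G$, they check that $G$ is non-hamiltonian and that $G-v$ is traceable for every vertex $v$, i.e.\ precisely the definition of a platypus. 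Running these programs on every snark in the catalogue and confirming that each one passes both tests establishes the proposition.

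The execution splits into a few routine steps. I would (i) fix, for each even $n$ with $10 \le n \le 30$, the file of snarks of order $n$ from~\cite{BGHM13}; (ii) for each such snark, run both platypus-testing programs; (iii) record that in every case the output is affirmative; and (iv) note as a sanity check that this is consistent with Theorem~\ref{thm:snark_order_hypo}, since every hypohamiltonian snark is a platypus and the only relevant order not covered there is $24$, where the search turns up platypus snarks that are not hypohamiltonian. Because the two programs are independent implementations, agreement between them guards against a single coding error, which is the standard safeguard used elsewhere in this paper.

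The main obstacle here is not mathematical depth but rather the reliability and scale of the computation: one must trust the completeness of the snark lists in~\cite{BGHM13} and the correctness of the traceability/hamiltonicity subroutines, especially since testing traceability of $G-v$ for all $v$ is an NP-hard task repeated across a large number of graphs. This is mitigated exactly as in~\cite{GZ17} and in the proof of Theorem~\ref{theorem:smallest_cubic_platypus}: two independently written programs are used, their results are cross-checked, and the underlying graph lists are drawn from an established, independently verified source. Given those safeguards, the verification is straightforward, and I would simply remark that the details of the programs are omitted because they are essentially the same as those already described for the hypohamiltonicity and platypus tests above.
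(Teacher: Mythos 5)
Your proposal matches the paper's approach exactly: the authors likewise take the complete lists of snarks from~\cite{BGHM13} and test each graph for platypusness using two independent computer programs, with non-hamiltonicity being automatic for snarks (a hamiltonian cubic graph is $3$-edge-colourable). The only thing you add is explicit commentary on reliability safeguards, which the paper leaves implicit; there is no gap.
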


Together with Theorem~\ref{thm:snark_order_hypo} this gives us:

\begin{theorem}
There exists a platypus snark of order $n$ if and only if $n = 10$ or $n$ is even and $n \ge 18$.
\end{theorem}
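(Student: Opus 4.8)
The plan is to combine the two ingredients already assembled in this section: Theorem~\ref{thm:snark_order_hypo}, which tells us exactly for which orders a \emph{hypohamiltonian} snark exists (namely $n \in \{10,18,20,22\}$ or $n$ even with $n \ge 26$), and the Proposition immediately preceding, which states that every snark on at most $30$ vertices is a platypus. Since every hypohamiltonian graph is a platypus, the set of orders admitting a platypus snark certainly contains $\{10,18,20,22\}$ and every even $n \ge 26$; the only remaining gap between $10$ and infinity, among even orders at least $10$ (recall snarks have girth at least $5$, so in particular no snark of order $< 10$ exists, and snarks exist for $n=24$ by \cite{BGHM13}), is $n = 24$. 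So the whole theorem reduces to exhibiting a platypus snark on $24$ vertices, and this is exactly what the Proposition delivers: pick any of the snarks of order $24$ from the complete list in \cite{BGHM13}; it has at most $30$ vertices, hence is a platypus.

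Concretely I would carry out the argument in the following order. First, recall that a snark is in particular a cubic graph of girth at least $5$, so the smallest snark has $10$ vertices (the Petersen graph) and snarks exist only for even orders; this pins down the candidate set of orders to even $n \ge 10$. Second, invoke Theorem~\ref{thm:snark_order_hypo} together with the observation (stated in the excerpt) that every hypohamiltonian graph is a platypus, to conclude that a platypus snark exists for every $n \in \{10,18,20,22\}$ and every even $n \ge 26$. Third, note that among even $n \ge 10$ the only value not yet covered is $n = 24$, and that snarks of order $24$ do exist. Fourth, apply the preceding Proposition to such a snark: since $24 \le 30$, it is a platypus. This establishes the "if" direction. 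For the "only if" direction, observe that any platypus snark is a snark, hence has even order at least $10$; conversely we have just shown all such orders except possibly those we have not checked are realised, and in fact every even $n \ge 10$ except none is missing — so the stated characterisation is tight. Finally, I would remark that $n = 12, 14, 16$ are \emph{not} in the list because there are simply no snarks of those orders (the next snark after Petersen has $18$ vertices), which is why the statement reads "$n = 10$ or $n$ even and $n \ge 18$" rather than "$n$ even and $n \ge 10$".

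There is essentially no obstacle here: the theorem is a bookkeeping combination of two already-proved statements, and the only "new" content is the single computational fact, encapsulated in the Proposition, that one (equivalently every) $24$-vertex snark is a platypus. The mild subtlety to get right is the description of the exceptional small orders: one must be careful that $12$, $14$ and $16$ are excluded not because the relevant graphs fail to be platypuses but because no snarks of those orders exist at all, so that the final characterisation inherits exactly the "gaps" of the snark-order spectrum below $18$. Once that is phrased correctly, the proof is a two-line deduction.

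\begin{proof}
Every snark is a cubic graph of girth at least $5$, hence has even order, and the smallest snark (the Petersen graph) has order $10$; moreover there are no snarks of order $12$, $14$, or $16$, while snarks of order $24$ do exist~\cite{BGHM13}. Thus a platypus snark can only have order $n = 10$ or $n$ even with $n \ge 18$.

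Conversely, by Theorem~\ref{thm:snark_order_hypo} there is a hypohamiltonian snark of order $n$ whenever $n \in \{10, 18, 20, 22\}$ or $n$ is even and $n \ge 26$, and every hypohamiltonian graph is a platypus; hence there is a platypus snark of each such order. The only remaining order of the form $n = 10$ or $n$ even with $n \ge 18$ is $n = 24$. Taking any snark of order $24$ from the lists in~\cite{BGHM13}, the preceding Proposition (every snark on at most $30$ vertices is a platypus) shows it is a platypus. This completes the proof.
\end{proof}
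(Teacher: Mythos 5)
Your proof is correct and follows exactly the route the paper takes: the paper derives this theorem by combining Theorem~\ref{thm:snark_order_hypo} (via the fact that hypohamiltonian graphs are platypuses) with the preceding Proposition, which supplies the single missing order $n=24$, and implicitly uses that no snarks of orders $12$, $14$, $16$ exist. Your write-up just makes these steps explicit.
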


However, not every snark is a platypus; using a computer we showed:

\begin{proposition}
The smallest snark which is not a platypus has $32$ vertices. There are exactly thirteen snarks on $32$ vertices which are not a platypus. The most symmetric of these thirteen snarks has an automorphism group of size $16$ and is shown in Figure~\ref{fig:snark_no_platypus}.
\end{proposition}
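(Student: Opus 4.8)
The proof is computational and follows the same pattern as the preceding propositions in this section. The plan is to start from the complete lists of all snarks of order at most $32$, which are available from Brinkmann, Goedgebeur, H\"agglund, and Markstr\"om~\cite{BGHM13}. First I would record the elementary observation that a cubic graph possessing a hamiltonian cycle is $3$-edge-colourable; since every snark has chromatic index $4$, every snark is non-hamiltonian, and therefore for a snark $G$ the property of being a platypus reduces to the single condition that $G - v$ is traceable for every $v \in V(G)$. This is exactly the condition tested by the two independent programs already described above (and used in~\cite{GZ17}), so for each snark in the lists I would run both programs, verifying for every vertex $v$ whether $G - v$ admits a hamiltonian path, and flag those graphs for which the test fails.

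By the previous proposition, every snark on at most $30$ vertices is a platypus, so the smallest snark that is not a platypus has order at least $32$. The computation over the $32$-vertex snarks then does the remaining work: it would show that exactly thirteen of them fail the platypus test, which simultaneously establishes that the bound $32$ is attained and pins down the count. Finally, for these thirteen graphs I would compute the automorphism groups with \texttt{nauty}, observe that the largest order occurring is $16$, check that this order is realised by a single graph among the thirteen, and display that graph in Figure~\ref{fig:snark_no_platypus}.

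There is no genuine mathematical obstacle here: the argument is a finite, though sizeable, enumeration, and the only real concern is the reliability of the computation. This is handled exactly as elsewhere in the paper: completeness of the input is guaranteed by~\cite{BGHM13}, the platypus test is performed by two independently written programs whose outputs must coincide, and the automorphism-group data is obtained by a standard application of \texttt{nauty}. The most delicate point, if any, is simply making sure the non-hamiltonicity of snarks is invoked so that no separate hamiltonicity check on $G$ itself is required.
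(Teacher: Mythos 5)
Your proposal matches the paper's approach exactly: the paper offers no separate proof but states in the surrounding text that the result was obtained by taking the complete lists of snarks from~\cite{BGHM13} and testing platypusness with two independent computer programs, which (since every snark is non-hamiltonian) reduces to checking traceability of all vertex-deleted subgraphs. Your additional remarks on why non-hamiltonicity is automatic and on computing the automorphism groups with \texttt{nauty} are correct and consistent with the paper's methodology.
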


\begin{figure}[h!tb]
	\centering
	\includegraphics[width=0.36\textwidth]{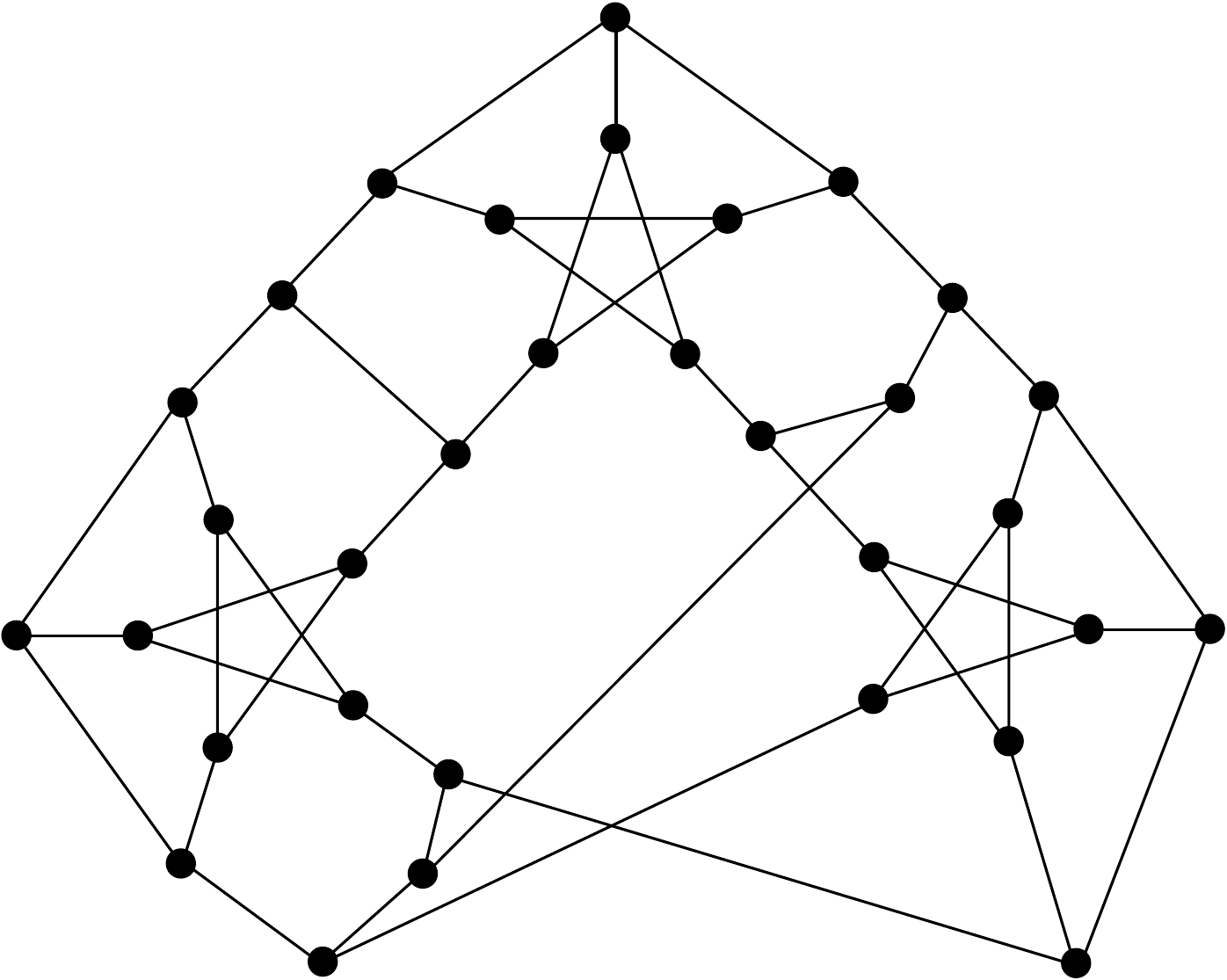}
	\caption{The most symmetric snark on 32 vertices which is not a platypus.} 
	\label{fig:snark_no_platypus}
\end{figure}

\section{Platypuses with a given girth}
\label{sect:general}
In this section we study the question for which integers $g \ge 3$ there exist platypuses of girth~$g$. It is known that there are hypohamiltonian graphs of girth $g$ whenever $3 \le g \le 7$. The order of the smallest hypohamiltonian graphs of a particular girth are given in~\cite{GZ17}. While infinitely many hypohamiltonian graphs of girth~7 are known~\cite{MS11}, the smallest among them being Coxeter's graph, no hypohamiltonian graphs of girth greater than 7 are known. A related open question in this direction was raised by M\'{a}\v{c}ajov\'{a} and \v{S}koviera~\cite{MS11}: Do infinitely many hypohamiltonian cubic graphs exist with both cyclic connectivity and girth $7$?

Can we exceed girth 7 in the (larger) family of platypuses? In order to address this question, we require the following well-known family of graphs. The {\em generalised Petersen graphs} ${\rm GP}(n,k)$---introduced by Coxeter~\cite{Co50} and named by Watkins~\cite{Wa69}---are defined as follows. For $n \ge 5$ and $k < n/2$ put $${\rm GP}(n,k) = \left(\{u_i, v_i\}_{i=0}^{n-1}, \{ u_i u_{i+1}, u_i v_i, v_i v_{i+k} \}_{i=0}^{n-1} \right)\hspace{-1mm},$$ indices mod $n$. In this notation, the Petersen graph is ${\rm GP}(5,2)$.

Alspach~\cite{Al83} showed that a graph in ${\rm GP}(n,k)$ is non-hamiltonian if and only if $n \equiv 5 \pmod 6$ and $k = 2$. Bondy~\cite{Bo72} strengthened this result by proving that all members of this particular subfamily are in fact hypohamiltonian, and thus platypuses.

For $n \ge 5$ and $k < n/2$, we define the {\em Petersen prism} ${\rm PP}(n,k)$ as $${\rm PP}(n,k) = \left(\{u_i, v_i, w_i^1, w_i^2\}_{i=0}^{n-1}, \{ u_i u_{i+1}, u_i w_i^1, w_i^1 w_i^2, w_i^2 v_i, v_i v_{i+k} \}_{i=0}^{n-1} \right)\hspace{-1mm},$$ indices mod $n$.

\begin{figure}[h!tb]
	\centering
	\includegraphics[width=0.54\textwidth]{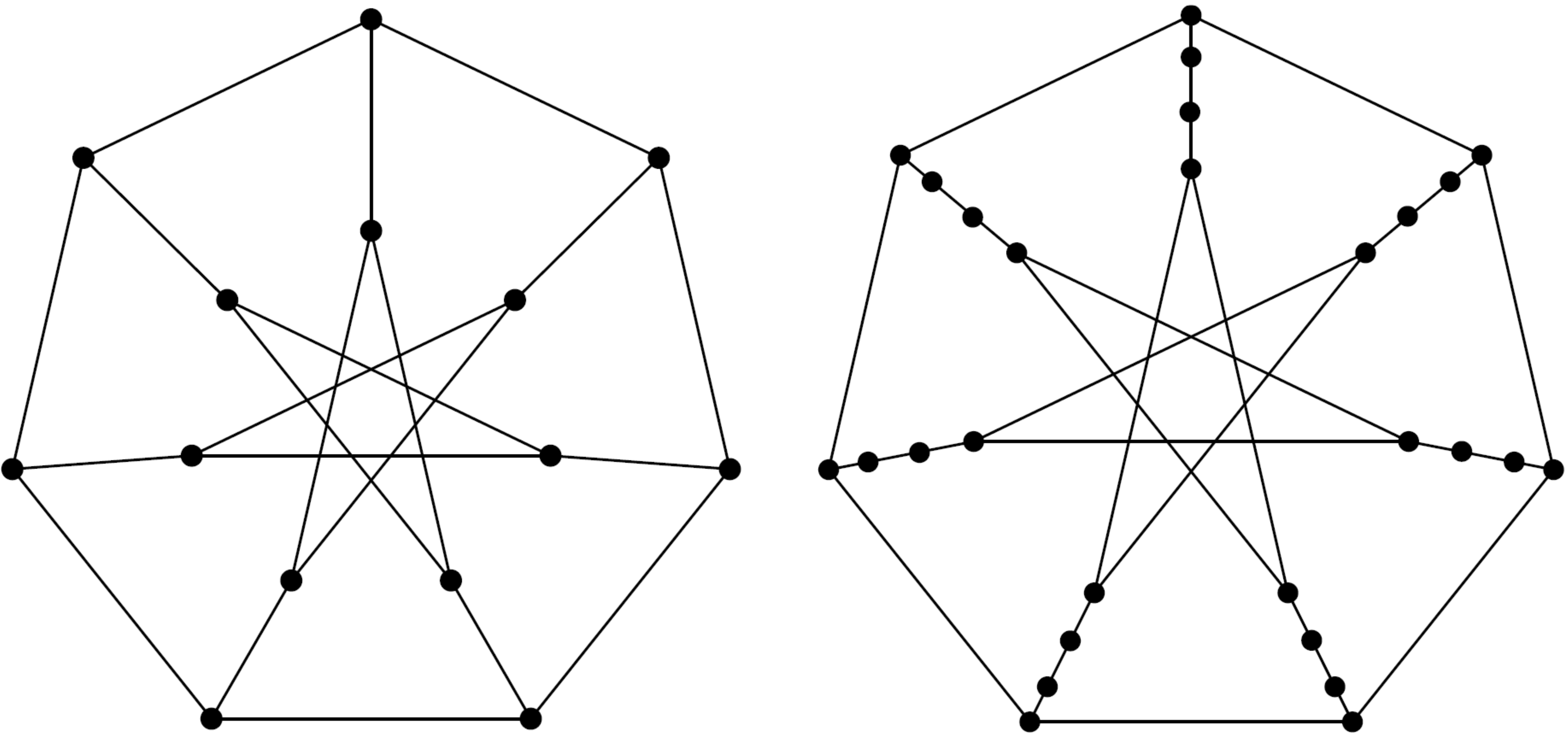}
	\caption{The generalised Petersen graph GP(7,3) and the Petersen prism PP(7,3).} 
	\label{fig:GP73_PP73}
\end{figure}

We call the edges of type $u_iv_i$ of a generalised Petersen graph \emph{spokes}. Then the associated Petersen prism can be obtained by adding two vertices on each spoke. Figure~\ref{fig:GP73_PP73} shows GP(7,3) and PP(7,3).

Let $G$ be a graph, consider its Cartesian product with $P_2$, i.e.\ $G \square P_2$, and replace each copy of $P_2$ with $P_3$. We will call the resulting graph the \emph{dotted prism over $G$} and denote it with $\dot{G}$. In a 2-connected graph $G$, we denote a path $P \subset G$ on $k \ge 3$ vertices and with end-vertices $v,w$ a \emph{$k$-ear} if $\{ v,w \}$ is a vertex-cut in $G$ and every vertex in $V(P) \setminus \{ v,w \}$ has degree~2 in $G$. We will also require an ear not to contain super-ears, i.e.\ for every ear $P$ there exists no ear $P'$ such that $P \subsetneq P'$.

It was shown in~\cite{Za17} that the dotted prism over a hamiltonian graph $G$ of odd order~$n \ge 3$ is a platypus, and that replacing any number of 3-ears with 4-ears in a dotted prism over an odd cycle yields a platypus. Let us call $D$ the operation of replacing in a given graph every 3-ear with a 4-ear. Then $D(\dot{C_n}) = {\rm PP}(n,1)$, where $C_n$ is the cycle of length $n$. So each member of ${\rm PP}(n,1)$ is a platypus for odd $n$. However, and this is easy to see, the maximum girth of a member of ${\rm PP}(n,1)$ is 8 (realised, for instance, by the graph ${\rm PP}(9,1) = D(\dot{C_9})$), one more than Coxeter's graph. Can we find platypuses of girth greater than 8? Yes, we can! But first we need two preparatory results:

\begin{lemma}\label{lemma:pp}
The Petersen prism ${\rm PP}(n,k)$ with $n$ odd is non-hamiltonian.
\end{lemma}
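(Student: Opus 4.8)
The plan is to exploit a parity/counting obstruction coming from the forced behaviour of a Hamilton cycle on the "rungs" of the prism. Let me set up notation. In ${\rm PP}(n,k)$ the vertex set splits into the outer cycle $u_0 u_1 \cdots u_{n-1}$, the inner cycle-like structure on the $v_i$ (with edges $v_i v_{i+k}$), and for each $i$ a subdivided spoke $u_i - w_i^1 - w_i^2 - v_i$. The key observation is that $w_i^1$ has degree $2$ (neighbours $u_i$ and $w_i^2$) and $w_i^2$ has degree $2$ (neighbours $w_i^1$ and $v_i$), so any Hamilton cycle $\mathfrak{h}$ must contain the entire path $u_i w_i^1 w_i^2 v_i$ for every $i$. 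Equivalently, every subdivided spoke is used in full by $\mathfrak{h}$.

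First I would contract each subdivided spoke $u_i w_i^1 w_i^2 v_i$ to a single edge $u_i v_i$; since these paths are forced to lie in $\mathfrak{h}$, Hamilton cycles of ${\rm PP}(n,k)$ correspond bijectively to Hamilton cycles of the underlying generalised Petersen graph ${\rm GP}(n,k)$ that use \emph{all $n$ spokes}. So it suffices to show ${\rm GP}(n,k)$ has no Hamilton cycle using every spoke when $n$ is odd. Next I would run the standard parity argument: in such a cycle, at each vertex $u_i$ exactly one of the two outer edges $u_{i-1}u_i$, $u_i u_{i+1}$ is used (the spoke already accounts for one of the three incident edges, and a Hamilton cycle uses exactly two), and likewise at each $v_i$ exactly one of the two inner edges $v_{i-k}v_i$, $v_i v_{i+k}$ is used. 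Hence the used outer edges form a perfect matching of the outer $n$-cycle, and the used inner edges form a perfect matching of the inner graph (a disjoint union of cycles $v_i v_{i+k} v_{i+2k}\cdots$). But an $n$-cycle has a perfect matching only if $n$ is even — and here $n$ is odd — giving a contradiction.

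The only subtlety, which is the step I would be most careful about, is the claim that the used outer edges form a perfect \emph{matching} of the outer cycle; this is what forces $n$ even. One must check that "exactly one of the two outer edges at each $u_i$ is used" really does give a set of pairwise non-adjacent edges saturating all $n$ vertices: adjacency of two used outer edges at a common $u_i$ would mean both outer edges at $u_i$ are used, so with the forced spoke $u_i$ would have degree $3$ in $\mathfrak{h}$, impossible. Since every $u_i$ is incident with exactly one used outer edge and no $u_i$ is incident with two, the used outer edges are a perfect matching of $C_n$, forcing $n$ even. This contradiction with $n$ odd completes the proof; the inner matching is not even needed, but it gives a second, independent obstruction and could be mentioned in passing. (I would present the outer-cycle argument as the main line, as it is the cleanest.)
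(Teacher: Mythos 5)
Your proof is correct and follows essentially the same route as the paper: the degree-$2$ vertices $w_i^1,w_i^2$ force every subdivided spoke into any Hamilton cycle, and then a parity count on the $n$ spokes gives the contradiction. The paper extracts the contradiction directly from the odd number of forced spoke-paths (their traversal directions must alternate around the cycle), whereas you phrase it as the outer cycle $C_n$ needing a perfect matching; these are equivalent, and your version is spelled out in more detail.
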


\begin{proof}
Assume it is hamiltonian. All vertices $w_i^1, w_i^2$ must be visited (i.e. each spoke must be traversed), which is only possible if the hamiltonian cycle contains every spoke, i.e.\ all paths $u_iw_i^1w_i^2v_i$. As $n$ is odd, there is an odd number of such paths, which yields a contradiction.
\end{proof}

\begin{theorem}\label{thm:pp}
The Petersen prism ${\rm PP}(n, 2)$ with $n$ odd is a platypus.
\end{theorem}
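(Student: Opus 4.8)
The plan is to combine the non-hamiltonicity already established in Lemma~\ref{lemma:pp} with a direct verification that every vertex-deleted subgraph of ${\rm PP}(n,2)$ is traceable, for $n$ odd. Since Lemma~\ref{lemma:pp} gives that ${\rm PP}(n,2)$ is non-hamiltonian, it remains only to produce a hamiltonian path in ${\rm PP}(n,2) - x$ for each vertex $x$. By the vertex-transitivity of the ``outer'' structure (the cyclic symmetry of the index set $\mathbb{Z}_n$) it suffices to treat one vertex of each orbit under the automorphism group: concretely, one vertex $u_i$, one vertex $v_i$, and one subdivision vertex on a spoke (say $w_i^1$, with $w_i^2$ handled symmetrically by the reflection-type symmetry of a single spoke). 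So there are essentially four cases, or three up to the spoke symmetry.

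First I would fix convenient notation for the paths to be exhibited. A useful building block is the observation that, just as in Lemma~\ref{lemma:pp}, any path through the interior of a spoke must use the whole segment $u_iw_i^1w_i^2v_i$ unless it starts or ends at one of $w_i^1,w_i^2$; so outside of the deleted vertex's spoke, every spoke is traversed in full, and the problem reduces to routing on the outer cycle $u_0u_1\cdots u_{n-1}$ and the inner graph formed by the $v_i$ together with the chords $v_iv_{i+2}$. For $n$ odd, the inner edges $\{v_iv_{i+2}\}$ form a single cycle of length $n$ visiting the $v_i$ in the order $v_0,v_2,v_4,\dots$; this is exactly the structure that makes ${\rm GP}(n,2)$ behave like a ``prism over two $n$-cycles,'' and it is what I would exploit. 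After deleting a vertex, one breaks the outer $n$-cycle (or the inner $n$-cycle) at a single edge and stitches the two cycles together along one surviving spoke, yielding a spanning path; the deleted vertex being a $w$-vertex instead frees up a spoke to act as the stitching point while leaving both $n$-cycles intact except that one of them must be opened.

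Concretely, the key steps are: (1) cite Lemma~\ref{lemma:pp} for non-hamiltonicity; (2) reduce to vertex-deleted traceability and, using the cyclic symmetry of indices together with the spoke-flip symmetry, reduce to the cases $x=u_0$, $x=v_0$, and $x=w_0^1$; (3) in each case write down an explicit hamiltonian path of ${\rm PP}(n,2)-x$ — for instance, for $x=w_0^1$ one can take the path that runs along the outer cycle $u_0u_1\cdots u_{n-1}$, drops down the spoke at $u_{n-1}$ through $w_{n-1}^1w_{n-1}^2v_{n-1}$, then traverses the inner ``two-step'' cycle on the $v_i$ from $v_{n-1}$ all the way around, arranging that it terminates at a neighbour of $w_0^2$, and finally appends $w_0^2$, so that all four vertex-classes are covered except $w_0^1$; for $x=u_0$ and $x=v_0$ one opens the corresponding $n$-cycle at $x$ and uses a spoke away from the opening to cross between the two cycles; (4) observe that $T$-type symmetry of a spoke maps the $w_0^1$-case to the $w_0^2$-case, completing the argument.

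The main obstacle I expect is step (3): getting the parity bookkeeping on the inner two-step cycle to line up so that, after removing a vertex and opening one of the two $n$-cycles, the remaining pieces really do concatenate into a single spanning path with the right endpoints at the crossing spoke — the ``two-step'' inner cycle means the $v_i$ are visited in the permuted order $0,2,4,\dots$, and one must be careful that the outer cycle is opened at an edge $u_ju_{j+1}$ such that both $u_j$ and $u_{j+1}$ sit on spokes whose lower ends $v_j,v_{j+1}$ are at the correct positions of the inner cycle to serve, respectively, as an endpoint of the path and as the far end of the inner traversal. This is entirely mechanical but slightly fiddly because of the $2$-step chords; once the right edge to open is identified (it will depend only on the residue class, and $n$ odd guarantees a valid choice exists), writing the explicit path is routine. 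It is worth noting that essentially the same reasoning would work for ${\rm PP}(n,k)$ with any fixed $k$ coprime to $n$; the case $k=2$ is singled out only because it is the one needed to push the girth past~$8$.
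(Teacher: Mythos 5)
Your top-level plan (non-hamiltonicity from Lemma~\ref{lemma:pp}, then an explicit hamiltonian path in each vertex-deleted subgraph, with the case analysis reduced by the cyclic symmetry of the indices) is the same as the paper's, but step (3) as you describe it would not produce spanning paths. You correctly observe at one point that every spoke $u_iw_i^1w_i^2v_i$ whose interior contains no end-vertex of the path must be traversed in full, since $w_i^1,w_i^2$ have degree~2. Yet your concrete path for ${\rm PP}(n,2)-w_0^1$ --- run along the whole outer cycle, descend one spoke at $u_{n-1}$, go once around the inner two-step cycle, append $w_0^2$ --- uses only two spokes and therefore misses the $2(n-2)$ interior vertices $w_j^1,w_j^2$ for $j\neq 0,n-1$. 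The ``open one rim cycle and stitch the two $n$-cycles together along one surviving spoke'' picture is the right one for ${\rm GP}(n,2)$, where spokes are single edges that need not all be used, but it is exactly wrong for the Petersen prism: there a hamiltonian path is forced to cross between the two rims via essentially every spoke, so it must weave back and forth in a pattern such as $(uv)_0(vu)_2(uv)_1(vu)_3\cdots$, and the whole difficulty is arranging that the rim segments consumed between consecutive crossings partition both rims. This is where the $n\bmod 4$ case split in the paper's proof comes from; it is not the ``mechanical'' endpoint bookkeeping you anticipate, and the path you wrote down is simply not spanning.

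A second, smaller gap: the reduction of the case $x=w_0^2$ to $x=w_0^1$ by a ``spoke-flip'' symmetry does not exist for $n\ge 7$. Such a symmetry would have to exchange the outer rim (a step-$1$ cycle) with the inner rim (a step-$2$ cycle), i.e.\ restrict to an automorphism of ${\rm GP}(n,2)$ swapping $u$- and $v$-vertices, which exists only when $2^2\equiv\pm1\pmod n$, i.e.\ for $n\in\{3,5\}$. Accordingly, the paper treats $G-w_0^1$ and $G-w_0^2$ with separate explicit constructions, the latter further split according to $n\bmod 4$. Note also that the paper settles $G-u_i$ and $G-v_i$ more cheaply than you propose: it exhibits a single hamiltonian path of $G$ itself with one end at $u_0$ and the other at a $v$-vertex, so deleting either end-vertex disposes of both cases at once.
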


\begin{proof}
Consider $G \in \{ {\rm PP}(n, 2) : n \ge 5 \ \text{is odd} \}$. From Lemma~\ref{lemma:pp} it follows that $G$ is non-hamiltonian. We leave to the reader the easy proof that ${\rm PP}(5, 2)$ is a platypus and assume in the following that $n \ge 7$.
We use the notation from the definition of a Petersen prism, and all indices shall be taken mod $n$. For a path in $G$ containing a spoke $S = u_iw_i^1w_i^2v_i$, we write $S$ as $(uv)_i$ when traversed from $u_i$ to $v_i$, and as $(vu)_i$ if traversed from $v_i$ to $u_i$. Let
$$q = \begin{cases}
                            n - 1 & {\rm if} \ n = 1 \ {\rm mod} \ 4\\
                            n - 2 & {\rm if} \ n = 3 \ {\rm mod} \ 4.
                        \end{cases}$$
The path
$$(uv)_0(vu)_2(uv)_1(vu)_3u_4 ... (uv)_q$$
is a hamiltonian path in $G$ from $u_0$ to $v_q$. (To be clear: when $n = 3 \ {\rm mod} \ 4$, the paths ends with $... (uv)_{q-1}(vu)_{q+1}(uv)_q$.) By symmetry and the fact that $n$ is odd, we can conclude that $G - u_i$ and $G - v_i$ are traceable for all $i$.
A hamiltonian path in $G - w_0^1$ must have $w_0^2$ as an end-vertex. The path
$$w_0^2v_0(vu)_{n-2}(uv)_{n-3}(vu)_{n-1}u_0(uv)_1(vu)_3(uv)_2 ... (uv)_{q-3}$$
is hamiltonian in $G - w_0^1$. Similarly, a hamiltonian path in $G - w_0^2$ must have $w_0^1$ as an end-vertex. For $n = 1$ mod 4 consider
$$w_0^1u_0(uv)_1(vu)_{n-1}(uv)_{n-2}v_0(vu)_2(uv)_3(vu)_5(uv)_4(vu)_6 ... (vu)_{n-3},$$
while for $n = 3$ mod 4 consider
$$w_0^1u_0(uv)_{n-1}(vu)_{n-3}(uv)_{n-2}v_0(vu)_2(uv)_1(vu)_3 ... (vu)_{n-4}.$$
These are hamiltonian paths in $G - w_0^2$. Once more invoking the symmetry of the graph and the fact that $n$ is odd, it follows that $G - w_i^1$ and $G - w_i^2$ are traceable for all $i$. (The case $G = {\rm PP}(9,2)$ is illustrated in Figure~\ref{fig:platypus_pp92}.) This completes the proof.
\end{proof}

\begin{figure}[h!tb]
    \centering
 \includegraphics[width=0.9\textwidth]{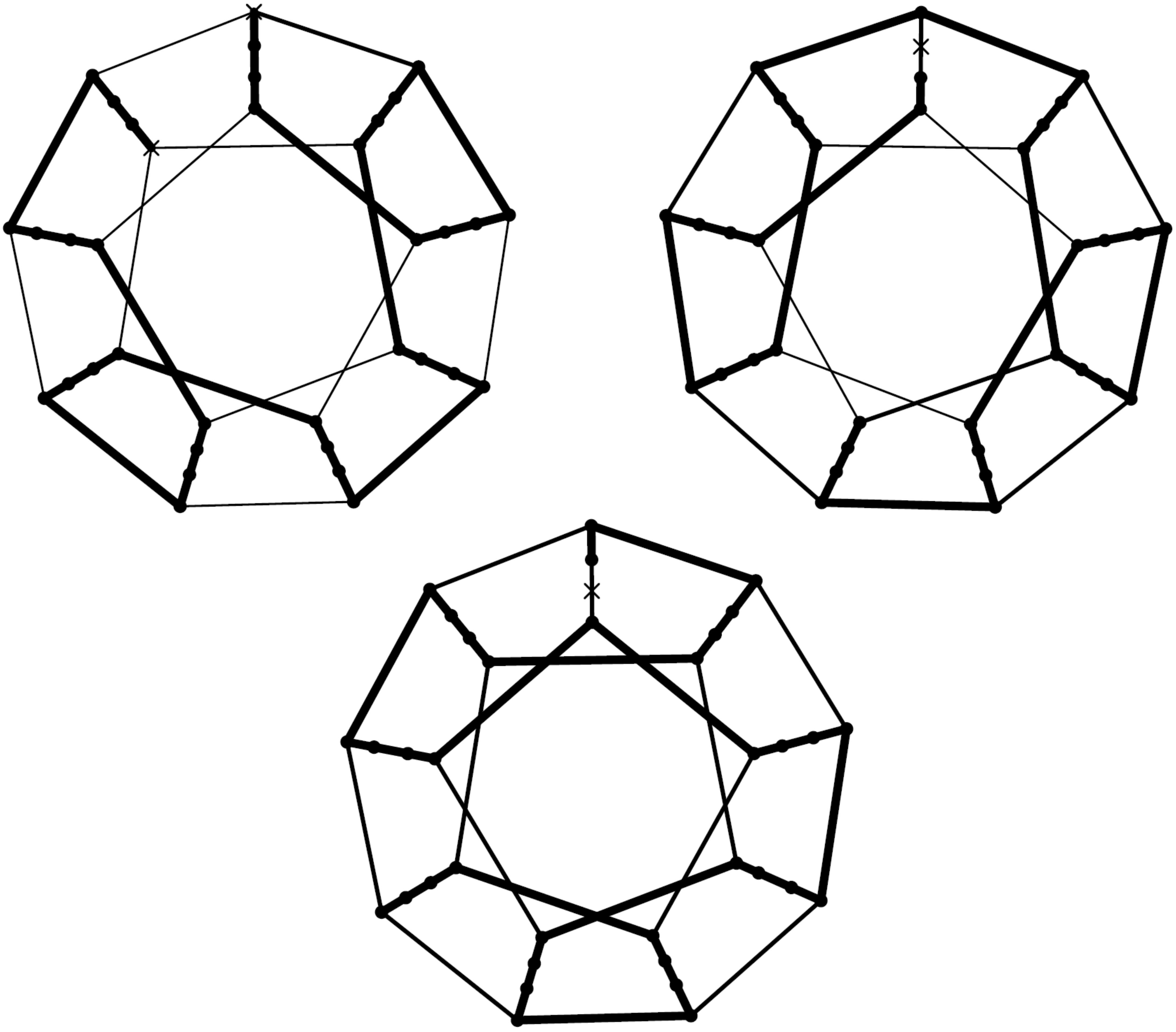}
    \caption{${\rm PP}(9,2)$ and the proof that all of its vertex-deleted subgraphs are traceable.}
    \label{fig:platypus_pp92}
\end{figure}

\begin{theorem}\label{thm:girth}
For every integer $g$ satisfying $3 \le g \le 16$ there exists a platypus of girth~$g$.
\end{theorem}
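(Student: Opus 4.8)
The plan is to treat small and large girths separately: I would realise $3\le g\le 7$ with known hypohamiltonian graphs and $8\le g\le 16$ with Petersen prisms whose parameter is tuned to the desired girth. For $3\le g\le 7$ it suffices to recall that hypohamiltonian graphs of each of the girths $3,4,5,6,7$ are known---the Petersen graph realises girth $5$, Coxeter's graph realises girth $7$, and the remaining values are recorded in~\cite{GZ17}---together with the fact, noted in the introduction, that every hypohamiltonian graph is a platypus.

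For $8\le g\le 16$ the first step is a girth computation for Petersen prisms: if $n$ is odd, coprime to $k$, and large enough relative to $k$, then $g({\rm PP}(n,k))=k+7$. Indeed, the outer rim is an $n$-cycle and the inner rim is a single $n$-cycle (this is where $\gcd(n,k)=1$ enters), so any cycle disjoint from the three-edge spoke paths $u_iw_i^1w_i^2v_i$ has length $n$. Since $w_i^1$ and $w_i^2$ have degree $2$, any cycle that meets a spoke path traverses it entirely, and such a cycle then uses an even number of spoke paths; one using exactly two has length $6+d$, where $d$ is the combined length of its outer and inner connecting arcs. Here $d$ is minimised, at $k+1$, by two spokes whose indices differ by $k$, joined by a single inner edge and a shortest outer path of length $k$, while cycles through four or more spokes are easily seen to be strictly longer. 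Hence the girth equals $6+(k+1)=k+7$, provided $n$ is also large enough that neither rim undercuts this value.

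With this lemma in hand I would argue as follows. For $g=8$ take ${\rm PP}(9,1)=D(\dot{C_9})$, a platypus by the quoted result of~\cite{Za17}, of girth $8$. For $g=9$ take ${\rm PP}(n,2)$ with $n$ odd and $n\ge 9$, a platypus by Theorem~\ref{thm:pp}, of girth $9$. For $10\le g\le 16$ set $k=g-7\in\{3,\dots,9\}$ and pick an odd $n$ coprime to $k$ and large enough for the girth formula; then ${\rm PP}(n,k)$ has girth exactly $g$ and, $n$ being odd, is non-hamiltonian by Lemma~\ref{lemma:pp}. It therefore remains only to show that every vertex-deleted subgraph of ${\rm PP}(n,k)$ is traceable.

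That last point is the heart of the matter, and the case $k=2$ settled in Theorem~\ref{thm:pp} is the blueprint: deleting $w_i^1$ or $w_i^2$ forces its spoke-mate to become an end-vertex of the sought hamiltonian path, while deleting a $u_i$ or $v_i$ is handled by a ``weaving'' path that alternately descends and ascends consecutive spokes, stitched together by single inner edges $v_jv_{j+k}$ and single outer edges $u_ju_{j+1}$. For general $k$ the same strategy applies, but the bookkeeping---the order in which the spokes are swept and the side from which each is entered, so that all $n$ spokes and both rims are covered---is more delicate and breaks into cases according to $n\bmod 2k$. The clean alternative, in keeping with the computational methods of this paper, is to write out the handful of explicit Petersen prisms ${\rm PP}(n,k)$ needed for $k=3,\dots,9$ and run them through the two independent platypus-testing programs described in Section~\ref{sect:cubic}, settling the remaining seven girths at once. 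I expect this traceability step for $k\ge 3$ to be the main obstacle to a purely hand-written proof: the weaving paths do exist, but describing them uniformly requires a noticeably more intricate analysis than the displayed $k=2$ case, so one would either restrict to carefully chosen pairs $(n,k)$ or pin down exactly the congruence conditions on $n$ that permit a uniform description.
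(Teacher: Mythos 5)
Your strategy is essentially the paper's: hypohamiltonian graphs from~\cite{GZ17} for $3\le g\le 7$, the Petersen prism ${\rm PP}(9,1)$ for $g=8$, and prisms ${\rm PP}(n,k)$ with $n$ odd for $9\le g\le 16$, with non-hamiltonicity from Lemma~\ref{lemma:pp} and traceability modelled on Theorem~\ref{thm:pp}. The differences are in the girth bookkeeping. You derive a uniform formula $g({\rm PP}(n,k))=k+7$ and set $k=g-7$; the paper instead fixes concrete pairs---${\rm PP}(11,3)$, ${\rm PP}(13,5)$, ${\rm PP}(23,5)$, ${\rm PP}(31,7)$, ${\rm PP}(39,7)$, ${\rm PP}(49,9)$, ${\rm PP}(59,9)$---and computes their girths via Ferrero--Hanusch~\cite{FH14}. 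Be careful with your claim that $d$ is minimised at $k+1$ by spokes at outer distance $k$: this is precisely what fails for moderate $n$, and the paper exploits the failure to get the odd girths with odd $k$ (e.g.\ ${\rm PP}(13,5)$ has girth $11$, not $12$, because spokes at outer distance $2$ are joined by an inner arc of only $3$ edges, and ${\rm PP}(31,7)$ has girth $13$, not $14$, for the analogous reason). Your formula is only guaranteed once $n$ is large enough that every outer distance $t$ not a multiple of $k$ forces an inner arc of length at least $(n-t)/k$; a safe sufficient condition is roughly $n>k^2$, which makes your graphs noticeably larger than the paper's, and your choice $k=g-7$ forces the even values $k\in\{4,6,8\}$ for $g\in\{11,13,15\}$, for which no weaving is worked out anywhere in the paper (the paper uses only $k=2,3,5,7,9$). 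Finally, you correctly identify the real gap---exhibiting hamiltonian paths in all vertex-deleted subgraphs of ${\rm PP}(n,k)$ for $k\ge 3$---but the paper leaves this equally open (it appeals to similarity with Theorem~\ref{thm:pp} and ``leaves the verification to the reader''), and your fallback of running the finitely many explicit prisms through the platypus-testing programs is a legitimate way to close it, in the spirit of the paper's own computational checks.
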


\begin{proof}
As discussed above, hypohamiltonian graphs (and thus platypuses) of all girths up to and including 7 are known~\cite{GZ17}. The Petersen prism ${\rm PP}(9,1)$ is a platypus of girth~8. For girth~9, consider $G_1 = {\rm PP}(9,2)$. We use Ferrero and Hanusch's \cite[Theorem 1.1]{FH14} (which is a practical restriction of a more general result due to Boben, Pisanski and \v{Z}irnik~\cite{BPZ05}) and obtain that $g({\rm GP}(9,2)) = 5$. Since 9 and 2 are relatively prime, any cycle of length at most~8 uses at least two spokes of ${\rm GP}(9,2)$, and \cite[Lemma 1.3]{FH14} of Ferrero and Hanusch yields that there exists a 5-cycle using exactly two spokes. Thus $g(G_1) = 9$. Theorem~\ref{thm:pp} implies that $G_1$ is a platypus.

For girth 10, consider $G_2 = {\rm PP}(11,3)$. Proceeding as above we obtain that $g({\rm GP}(11,3)) = 6$. Since 11 and 3 are relatively prime, any cycle of length at most~10 uses at least two spokes of $G_2$. By \cite[Lemma 1.3]{FH14}, there exists a 6-cycle using exactly two spokes. Thus $g(G_2) = 10$. By Lemma~\ref{lemma:pp}, $G_2$ is non-hamiltonian. The reasoning that every vertex-deleted subgraph of $G_2$ is traceable is very similar to what was presented in the proof of Theorem~\ref{thm:pp} and Figure~\ref{fig:platypus_pp92} and therefore omitted. We also skip the analysis for girths 11 and 12, and only mention that it suffices to consider ${\rm PP}(13,5)$ and ${\rm PP}(23,5)$, respectively---the arguments are similar to the ones given above.

For girth 13, consider $G_3 = {\rm PP}(31,7)$. By \cite[Theorem 1.1]{FH14}, we have that $g({\rm GP}(31,7)) = 8$. A cycle in ${\rm GP}(31,7)$ using no spokes has length at least 31, and it is straightforward to verify that a cycle in ${\rm GP}(31,7)$ containing exactly two spokes has length at least 9, and such a 9-cycle exists, take e.g.\ $$v_1v_8v_{15}v_{22}v_{29}u_{29}u_{30}u_{31}u_1.$$ Clearly, any cycle in $G_3$ using four or more spokes has length at least 16. Thus $g(G_3) = 13$. $G_3$ is non-hamiltonian by Lemma~\ref{lemma:pp}. We leave to the reader the routine verification that every vertex-deleted subgraph of $G_3$ is traceable, which is akin to the one given in Figure~\ref{fig:platypus_pp92}.

Similar arguments yield that ${\rm PP}(39,7)$, ${\rm PP}(49,9)$, and ${\rm PP}(59,9)$ are platypuses of girth 14, 15, and 16, respectively. We also wrote a computer program which constructs the Petersen prism ${\rm PP}(n,k)$ for given $n$ and $k$ and also verified by computer that the Petersen prisms mentioned in this proof indeed have the desired girth.
\end{proof}

In above theorem we have not discussed the problem of finding the \emph{smallest} (regarding order) platypus of a particular girth---a computational approach will allow us to do so in Section~\ref{sect:computations}.

By Ferrero and Hanusch's \cite[Theorem 1.1 and Lemma 1.3]{FH14}, we can conclude that the largest girth obtainable through the approach using Petersen prisms is 16: as girth 8 is the maximum possible girth in a generalised Petersen graph, there are always cycles of length 8 using exactly four spokes, e.g.\ in ${\rm GP}(n,k)$ the cycle $$u_1v_1v_{k+1}u_{k+1}u_{k+2}v_{k+2}v_2u_2,$$ and by \cite[Proposition 2.1(v)]{Za17} we cannot add more vertices on a spoke and remain in the family of platypuses. For planar graphs, the situation is dramatically different:

\begin{theorem}
A planar platypus has girth at most~$9$.
\end{theorem}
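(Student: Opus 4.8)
The plan is to combine a structural/combinatorial constraint coming from planarity with the fact that vertex-deleted subgraphs of a platypus are traceable. Suppose for contradiction that $G$ is a planar platypus with $g(G) \ge 10$; the goal is to derive that $G$ would then have to be hamiltonian, contradicting the definition of a platypus. The natural engine is an Euler-type / discharging argument: for a $2$-connected planar graph of girth $g$ we have $|E(G)| \le \frac{g}{g-2}(|V(G)| - 2)$, so girth at least $10$ forces $|E(G)| \le \frac{10}{8}(|V(G)|-2) = \frac{5}{4}(|V(G)|-2)$, hence the average degree is below $5/2$ and in particular $G$ must contain vertices of degree $2$. More precisely, with $n = |V(G)|$ and $n_i$ the number of vertices of degree $i$, summing $2|E| = \sum_i i\, n_i$ against the girth bound yields $\sum_i (i - \tfrac{5}{2}) n_i \le -5$, so there is a genuine surplus of low-degree (degree-$2$) vertices. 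I would first extract from this the existence of long "ears": paths of degree-$2$ vertices, or better, a structural statement of the shape "$G$ contains two adjacent degree-$2$ vertices" or "a degree-$2$ vertex whose two neighbours are themselves close to degree $2$".

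The second ingredient is a local analysis near a degree-$2$ vertex, using that $G$ is a platypus. If $x$ has degree $2$ with neighbours $a$ and $b$, then for \emph{every} vertex $v$, $G - v$ has a hamiltonian path; taking $v \notin \{x,a,b\}$, the hamiltonian path of $G-v$ must traverse $x$ and hence use both edges $xa$ and $xb$ (unless $x$ is an end-vertex of that path). This is exactly the kind of rigidity exploited in the proof of Theorem~\ref{thm:pp} and in \cite[Proposition 2.1]{Za17}, and it is why one cannot put too many vertices on a "spoke". The key step is to show that in a planar graph of girth $\ge 10$ one can always find a configuration of degree-$2$ vertices that is so constrained that the traceability of the various vertex-deleted subgraphs can be patched together into a hamiltonian cycle of $G$ itself: the degree-$2$ vertices are forced to be used in essentially one way, and a short face (which, being of length $\ge 10$, must nonetheless carry many degree-$2$ vertices by the discharging count localised to that face) gives enough freedom to reroute. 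Equivalently, one shows $g(G) \ge 10$ is incompatible with the ear structure that a platypus is allowed to have — cf.\ the remark that ${\rm PP}(9,1)$ of girth $8$ is already extremal for adding vertices on spokes.

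Concretely, I would carry out the steps in this order: (1) reduce to $G$ being $2$-connected (a platypus is $2$-connected, since a cut-vertex or a degree-$\le 1$ vertex obstructs traceability of some $G-v$); (2) apply the girth–edge inequality to get the degree-$2$ surplus and, by a discharging scheme assigning charge $\deg(v) - \tfrac{5}{2}$ to vertices and $\tfrac{1}{2}\bigl(\tfrac{|f|}{ ? } - \ldots\bigr)$-style charge to faces (or the standard $\mathrm{ch}(v)=\deg(v)-4$, $\mathrm{ch}(f)=|f|-4$ with total $-8$), conclude the existence of a face $f$ bounded by many consecutive degree-$2$ vertices, hence a long ear $P$; (3) on a shortest cycle $C$ (length $g \le$ ??? — this is where planarity feeds back) and the long ear, run the platypus condition at well-chosen deleted vertices to force the global hamiltonian cycle. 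The main obstacle — and the part I expect to require the most care — is step (3): turning the \emph{local} rigidity of degree-$2$ vertices plus the \emph{global} existence of many hamiltonian paths in vertex-deleted subgraphs into an actual hamiltonian cycle of $G$. The danger is that the various hamiltonian paths of $G-v$ for different $v$ "interfere" and cannot be stitched together; the resolution should come from choosing the deleted vertex $v$ to lie on the long ear so that the ear itself is (almost) forced, and from using girth $\ge 10$ to guarantee that the relevant faces are large enough that no short cycle spoils the surgery, while simultaneously being "too large" to host the required number of degree-$2$ vertices — the quantitative tension between these two facts is what pins the bound at $9$.
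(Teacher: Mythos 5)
There is a genuine gap, and it sits exactly where you yourself flag it: step (3). The good news is that step (3) is not needed at all, and step (2) points in the wrong direction. The key fact --- which you cite in passing but never actually deploy --- is Proposition~2.1(v) of~\cite{Za17}: in a platypus, every vertex has \emph{at most one} neighbour of degree~2. This does not produce long ears; it \emph{forbids} them (no three consecutive degree-2 vertices, and every vertex ``hosts'' at most one degree-2 vertex). That is essentially the whole proof. Suppress the degree-2 vertices of a hypothetical planar platypus $G$ of girth at least $10$: since each ear has at most two internal vertices and each branch vertex flanks at most one ear, a cycle of length $L$ in $G$ survives as a cycle of length $\ell$ with $L \le \ell + 2\lfloor \ell/2\rfloor$, so $L \ge 10$ forces $\ell \ge 6$. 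The suppressed graph is planar with minimum degree at least $3$ and girth at least $6$, contradicting Euler's formula via $|E| \le \tfrac{6}{4}(|V|-2) < \tfrac{3}{2}|V| \le |E|$. Equivalently, in the spirit of your own counting: girth $\ge 10$ and planarity give $2|E| \le \tfrac{5}{2}(n-2) < \tfrac{5}{2}n$, while ``at most one degree-2 neighbour per vertex'' gives $2n_2 \le n$ and hence $2|E| \ge 2n_2 + 3(n-n_2) = 3n - n_2 \ge \tfrac{5}{2}n$ --- a contradiction with no surgery on hamiltonian paths whatsoever.

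Two concrete flaws in your plan. First, the surplus of degree-2 vertices coming from the girth--edge inequality does \emph{not} force two adjacent degree-2 vertices, let alone a face ``bounded by many consecutive degree-2 vertices'': a graph in which every degree-3 vertex has three degree-2 neighbours and every degree-2 vertex has two degree-3 neighbours has average degree $12/5 < 5/2$ with no two degree-2 vertices adjacent, so your discharging cannot deliver the configuration you ask for. Second, even granting a long ear, your step (3) --- stitching hamiltonian paths of the various $G-v$ into a hamiltonian cycle of $G$ --- is precisely the kind of argument that fails in general: hypohamiltonian graphs have hamiltonian \emph{cycles} in every vertex-deleted subgraph and are still non-hamiltonian, so no amount of traceability of the $G-v$ can be assembled into hamiltonicity of $G$ without a specific mechanism, and you supply none. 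The correct logic runs the other way from what you attempt: the platypus condition bounds the density of degree-2 vertices from \emph{above}, which forces the edge count up, while planarity with girth $\ge 10$ forces it down; the two bounds collide, and the collision already disappears at girth $9$ (indeed ${\rm PP}(9,1)$ is a planar platypus of girth $8$), which is what pins the threshold.
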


\begin{proof}
We need the following simple yet crucial fact first proven in~\cite[Proposition~2.1(v)]{Za17}: in a platypus, any vertex has at most one neighbour of degree~2. Assume there exists a graph $G$ obtained from a planar platypus of girth at least~10 by suppressing all vertices of degree~2 (to be clear: a path $uvv'w$ in the planar platypus, where the degrees of $v$ and $v'$ are 2 and the degrees of $u$ and $w$ are at least 3, becomes $uw$; here, $v$ and $v'$ may coincide). $G$ is a planar graph with minimum degree at least~3 and girth at least~6, by the aforementioned fact. But this contradicts Euler's formula.
\end{proof}

Note that a planar platypus of girth $g$ exists for every $g \in \{ 3, ..., 8 \}$: $\dot{C}_3$, $D(\dot{C}_5) = {\rm PP}(5,1)$ plus an edge between the two end-vertices of a 4-ear, $\dot{C}_5$, $\dot{C}_7$, $D(\dot{C}_7) = {\rm PP}(7,1)$, and $D(\dot{C}_9) = {\rm PP}(9,1)$, respectively. However, it remains unknown whether a planar platypus of girth 9 exists.

\section{Maximum degree of platypuses}\label{sect:maxdeg}

A graph $G$ is called \emph{homogeneously traceable} if every vertex of the graph is an end-vertex of a hamiltonian path. It is easy to see that every non-hamiltonian homogeneously traceable graph is a platypus, but not every platypus is a non-hamiltonian homogeneously graph---simply consider non-traceable platypuses, i.e.\ hypotraceable graphs. Chartrand, Gould, and Kapoor proved in~\cite[Theorem~1]{CGK79} that if $G$ is a non-hamiltonian homogeneously traceable graph, then $\Delta(G) \le n - 4$. The first part of the following theorem is a generalisation of this result.

\begin{theorem} \label{thm:maxdeg}
If $G$ is a platypus of order $n$, then $\Delta(G) \le n - 4$. For $n \le 9$ there are precisely four platypuses, each of order~$9$, one of maximum degree~$3$ and the other three of maximum degree~$4$, hence, the bound is not sharp for $n = 9$. However, for all $n \ge 10$ there exists a platypus $G$ of order $n$ with $\Delta(G) = n - 4$.
\end{theorem}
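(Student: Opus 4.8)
The plan is to prove the three assertions of Theorem~\ref{thm:maxdeg} in turn, with the first (the degree bound $\Delta(G) \le n-4$) being the conceptual heart and the other two being finite verification plus an explicit construction.

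\emph{The bound $\Delta(G)\le n-4$.} Let $G$ be a platypus of order $n$ and suppose for contradiction that some vertex $v$ has $\deg(v)\ge n-3$; I will rule out $\deg(v) \in \{n-1, n-2, n-3\}$ one at a time. If $\deg(v)=n-1$ then $v$ is adjacent to everything, so a hamiltonian path of $G-u$ for any neighbour $u$ of $v$ extends through $v$ to a hamiltonian cycle of $G$ — but we must be a little careful, since a hamiltonian path of $G-u$ need not have $v$ as an endpoint. The cleaner route is: take a hamiltonian path $P$ of $G-x$ where $x$ is chosen as an endpoint-neighbour; more robustly, pick any vertex $w\neq v$, let $P$ be a hamiltonian path of $G-w$, let $a,b$ be its endpoints. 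Since $v$ is universal, $va$ and $vb$ are edges if $a,b\neq v$; inserting $w$ between $v$ and one of its neighbours on $P$ and closing up yields a hamiltonian cycle. I would organize this as: in a platypus no vertex is universal, and then push the same idea down. For $\deg(v)=n-2$, $v$ misses exactly one vertex $z$; take a hamiltonian path $P$ of $G-z$. Then $V(P)=V(G)\setminus\{z\}$ and $v$ is universal in $G-z$, so by the universal-vertex argument $G-z$ is hamiltonian; but a hamiltonian cycle $C$ of $G-z$ together with the fact that $v$ has degree $n-2\ge$ its degree in $C$ plus one means nothing directly — instead I want a hamiltonian \emph{path} of $G$. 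Here is the trick: $z$ has a neighbour $y\neq v$ (as $G$ has min degree $\ge 2$ and $z\not\sim v$, actually we need $z$ to have \emph{two} neighbours, both $\neq v$); a hamiltonian cycle $C$ of $G-z$ passes through $y$, delete an edge of $C$ at $v$ to get a hamiltonian path of $G-z$ with $v$ an endpoint, prepend $z$ via $zy$... this requires $y$ adjacent on the path appropriately. The case $\deg(v)=n-3$, $v$ missing $\{z_1,z_2\}$, is handled similarly by looking at hamiltonian paths of $G-z_1$ and $G-z_2$ and the structure forced near $z_1,z_2$. In all three cases the contradiction is the existence of a hamiltonian cycle in $G$. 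I expect \textbf{the $n-3$ case to be the main obstacle}: one has two non-neighbours of $v$ to control simultaneously, and the combinatorics of how the hamiltonian paths of $G-z_1$ and $G-z_2$ interact near $z_1,z_2,v$ needs a careful case split (e.g.\ whether $z_1z_2\in E(G)$, and how many of the at-least-two edges at each $z_i$ land inside $N(v)$).

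\emph{The $n\le 9$ statement.} By the Van Cleemput--Zamfirescu result cited in the excerpt there are no platypuses on fewer than $9$ vertices, so it suffices to enumerate the $9$-vertex platypuses; this is exactly the $n=9$ entry of Table~\ref{table:counts_platypuses}, giving four graphs, one cubic (so $\Delta=3$) and three with $\Delta=4=n-5$. Hence $\Delta(G)\le n-5 < n-4$ for $n=9$, proving non-sharpness there. I would simply cite the table (and remark that a quick by-hand check is possible).

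\emph{Sharpness for all $n\ge 10$.} I need, for every $n\ge 10$, a platypus $G$ on $n$ vertices with a vertex of degree $n-4$. The natural candidates are maximally non-hamiltonian graphs with a dominating-type vertex: by Lemma~\ref{lemma:maxdeg}, a maximally non-hamiltonian graph $G$ with $\Delta(G)<n-1$ is automatically a platypus, so it is enough to exhibit maximally non-hamiltonian graphs of order $n$ with a vertex of degree exactly $n-4$. Such graphs are classical — for instance the graphs $K_{n-4} \vee \overline{K_?}$-type constructions, or more concretely the family of maximally non-hamiltonian graphs built from $K_1 \vee (K_1 \cup K_1 \cup K_{n-3})$-style amalgams, which are well known to be maximally non-hamiltonian with maximum degree $n-4$; one then checks the degree of the apex vertex is $n-4$ and that $\Delta = n-4 < n-1$, so Lemma~\ref{lemma:maxdeg} applies. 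Concretely I would take the Bondy / Clark--Entringer style examples: start from a triangle with vertices $a,b,c$, replace each of $a,b,c$ by an independent set or a clique of the appropriate size to pad up to $n$ vertices, keeping one designated vertex joined to everything outside two fixed non-neighbours, verify non-hamiltonicity (a cut-like obstruction) and the hamiltonian-path-between-nonadjacent-vertices property directly, and read off that the designated vertex has degree $n-4$. The verification is routine but must be done for each residue/parity class of $n$; since the construction is uniform in $n$ this is a single argument. The mild obstacle here is choosing a construction that works for \emph{every} $n\ge 10$ (not just, say, even $n$) while keeping the maximally-non-hamiltonian check short; I would pick the most standard such family and cite~\cite{CE83} or~\cite{Za17} for the maximally non-hamiltonian property, then invoke Lemma~\ref{lemma:maxdeg}.
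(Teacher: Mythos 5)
Your overall strategy for the degree bound (rule out $\Delta(G)\in\{n-1,n-2,n-3\}$ one case at a time, each time manufacturing a hamiltonian cycle of $G$) is the paper's strategy, and your treatment of the $n\le 9$ statement (cite the enumeration) matches the paper. But the decisive content of the theorem is the case $\Delta(G)=n-3$, and that is exactly the case you leave unproven: you write that it is "handled similarly" and simultaneously flag it as "the main obstacle". It is not handled similarly. In the paper this case is a long argument: writing ${\mathfrak p}_v=v_1\dots v_{n-1}$ for a hamiltonian path of $G-v$, the troublesome configuration is when \emph{both} non-neighbours $x,x'$ of $v$ are forced to be the endpoints $v_1,v_{n-1}$; resolving it requires bringing in auxiliary hamiltonian paths of $G-y$ and of $G-v_2$, using $2$-connectedness to find second neighbours $v_i$ of $x$ and $v_j$ of $x'$, and then a five-way case split on the positions of $i$ and $j$ (including the boundary cases $j=2$, $i=n-2$, and $x\sim x'$). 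None of that is present or implied in your sketch. Your $n-2$ sketch also contains a false step: a traceable graph with a universal vertex need not be hamiltonian ($P_3$ is a counterexample; hamiltonicity of $G-z$ would require traceability of $G-\{z,v\}$, which platypusness does not supply), and even if $G-z$ were hamiltonian that is not yet a contradiction, since you must still insert $z$, whose two neighbours need not be consecutive on the cycle. The paper avoids all of this by working with a hamiltonian path of $G-v$ rather than $G-z$: exactly one vertex of that path is non-adjacent to $v$, so either $v$ can be inserted between the two endpoints, or the bad vertex is an endpoint $x$, in which case $2$-connectedness gives $x$ a second neighbour $y$ on the path and a standard rotation closes the cycle. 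You should redo the $n-2$ case along those lines and then actually write out the $n-3$ analysis.

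For sharpness at every $n\ge 10$, your detour through maximally non-hamiltonian graphs and Lemma~\ref{lemma:maxdeg} is not wrong in principle, but as written it rests on an unspecified "standard family" whose maximally non-hamiltonian property and degree sequence you do not verify (and a join such as $K_1\vee(\cdot)$ produces a universal vertex, which Lemma~\ref{lemma:maxdeg} explicitly excludes). The paper's route is both shorter and already available to you: Chartrand, Gould, and Kapoor construct, for every $n\ge 10$, a non-hamiltonian homogeneously traceable graph of order $n$ with maximum degree exactly $n-4$, and every non-hamiltonian homogeneously traceable graph is a platypus. Citing that construction closes the third part in one line.
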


\begin{proof}
We now prove the first part of the statement. We skip the simple case $\Delta(G) = n - 1$. Assume $G$ has maximum degree $n - 2$, and let $v \in V(G)$ be a vertex of this degree. Let ${\mathfrak p}_v$ be a hamiltonian path in $G - v$ with end-vertices $u$ and $w$. Since $\deg(v) = n - 2$, there exists exactly one vertex $x \in V({\mathfrak p}_v)$ such that $v$ and $x$ are non-adjacent. If $x \notin \{ u,w \}$, we contradict the non-hamiltonicity of $G$ (consider ${\mathfrak p}_v \cup uvw$), so consider w.l.o.g.\ $x = u$. 
As $G$ is 2-connected, $x$ has a neighbour $y$ in $V({\mathfrak p}_v)$ which is not the (unique) neighbour of $x$ on ${\mathfrak p}_v$.
Now consider the subpath of ${\mathfrak p}_v$ between $x$ and $y$ and remove $y$ to obtain a path ${\mathfrak p}'_v$ with end-vertices $x$ and $y'$. Let ${\mathfrak p}''_v$ be the subpath of ${\mathfrak p}_v$ between $y$ and $w$. Then ${\mathfrak p}'_v \cup {\mathfrak p}''_v \cup y'vw + yx$ is a hamiltonian cycle in $G$, a contradiction.

Now suppose $G$ has maximum degree $n - 3$, and let $v \in V(G)$ be a vertex of degree $n - 3$. There exist two vertices $x, x'$ in $G - v$ that are non-adjacent to $v$. As above, let ${\mathfrak p}_v$ be a hamiltonian path in $G - v$ with end-vertices $u$ and $w$. If neither $x$ nor $x'$ is an end-vertex of ${\mathfrak p}_v$, we are done, so assume that $x = u$ is an end-vertex of ${\mathfrak p}_v$ but $x'$ is not. This can be dealt with as in the first paragraph of this proof, unless the second neighbour $y$ of $x$ is adjacent to $x'$ and lies on the subpath of ${\mathfrak p}_v$ between $x'$ and $w$, and $x$ has no further neighbours. 
Consider the hamiltonian path ${\mathfrak p}_y$ in $G - y$. ${\mathfrak p}_y$ has $x$ as an end-vertex, but cannot have $x'$ as an end-vertex, since then ${\mathfrak p}_y \cup xyx'$ would be a hamiltonian cycle in $G$. Thus, $x'$ has a neighbour $z$ which is not adjacent to $x'$ on ${\mathfrak p}_v$. If $z$ lies on the subpath of ${\mathfrak p}_v$ between $w$ and $y$, then, writing ${\mathfrak p}_{ab}$ for the subpath of ${\mathfrak p}_v$ between vertices $a$ and $b$ on ${\mathfrak p}_v$, we obtain a contradiction through the hamiltonian cycle $${\mathfrak p}_{xx'} +x'z \cup {\mathfrak p}_{zw} \cup wvz' \cup {\mathfrak p}_{z'y} + yx,$$ where $z'$ is the neighbour of $y$ on ${\mathfrak p}_v$ which is not $x'$. If $z$ lies on the subpath of ${\mathfrak p}_v$ between $x'$ and $x$, a very similar argument yields a contradiction, as well---recall that $z$ is not adjacent to $x'$ on ${\mathfrak p}_v$.

So let ${\mathfrak p}_v = v_1 ... v_{n-1}$ with $x = v_1$ and $x' = v_{n-1}$. Since $G$ is 2-connected, $x$ and $x'$ have neighbours $v_i$ and $v_j$ in $G - v$, respectively, where $i \ne 2$ and $j \ne n - 2$. We now present the five essentially different cases, all of which lead to a hamiltonian cycle and thus a contradiction. If $i \le j$, we have $v v_{i-1} ... v_1 v_i ... v_j v_{n-1} ... v_{j+1} v$. If $i > j$, $i \ne n - 2$ and $j \ne 2$, then $v v_{j-1} ... v_1 v_i ... v_j v_{n-1} ... v_{i+1} v$ is a hamiltonian cycle in $G$, again a contradiction. If $i > j$ and $j = 2$, then $v v_3 ... v_i v_1 v_2 v_{n-1} ... v_{i+1} v$ leads to a contradiction. (The case $i > j$ and $i = n-2$ is analogous.) Consider the case $i > j$ with $i = n-2$ and $j = 2$. 
Consider the path ${\mathfrak p}$ in $G - v_2$. If the end-vertices of ${\mathfrak p}$ are $v_1$ and $v_{n-1}$, we are done due to the cycle ${\mathfrak p} \cup v_1v_2v_{n-1}$. Therefore, $v_{n-1}$ must have a neighbour $v_k$ other than $v_2$ and $v_{n-2}$. But then $v_1...v_{k-1}vv_{k+1}v_kv_{n-1}v_{n-2}v_1$ leads to a contradiction.
Finally, consider the case that $v_1$ and $v_{n-1}$ are adjacent. In this case it is trivial to obtain a contradiction.

That the smallest platypuses have order 9, that there exist precisely four platypuses of that order, and that they satisfy the properties given in the statement was proven in~\cite{Za17} by Van Cleemput and the third author. It also follows by inspecting the complete list of platypus graphs up to 12~vertices presented in Section~\ref{sect:computations}.

The final part of the theorem is a direct consequence of a construction of Chartrand, Gould, and Kapoor: they prove in~\cite{CGK79} that for every $n \ge 10$ there exists a non-hamiltonian homogeneously traceable graph (and thus a platypus) of order $n$ and with maximum degree $n-4$.
\end{proof}

Combining Lemma~\ref{lemma:maxdeg} with Theorem~\ref{thm:maxdeg} we obtain the following.

\begin{corollary}
A maximally non-hamiltonian graph of order $n$ cannot have maximum degree $n-2$ or $n-3$.
\end{corollary}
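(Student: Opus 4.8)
The plan is to obtain the corollary as an immediate consequence of Lemma~\ref{lemma:maxdeg} and Theorem~\ref{thm:maxdeg}. I would argue by contradiction: suppose $G$ is a maximally non-hamiltonian graph of order $n$ with $\Delta(G) \in \{ n-2, n-3 \}$. In either case $\Delta(G) < n - 1$, so the hypothesis of the ``if'' direction of Lemma~\ref{lemma:maxdeg} is satisfied, and we conclude that $G$ is a platypus.

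Next I would apply the first part of Theorem~\ref{thm:maxdeg} to $G$: since $G$ is a platypus of order $n$, its maximum degree is at most $n-4$. This contradicts $\Delta(G) \in \{ n-2, n-3 \}$, so no such $G$ can exist. Equivalently, a maximally non-hamiltonian graph of order $n$ has maximum degree either $n-1$ or at most $n-4$, which is exactly the assertion.

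The only point deserving a moment of attention is the passage from ``maximally non-hamiltonian'' to ``platypus'', which is legitimate precisely because both excluded values $n-2$ and $n-3$ lie strictly below $n-1$; for the value $n-1$ itself Lemma~\ref{lemma:maxdeg} gives no information, and indeed identifying a vertex of $K_p$ with a vertex of $K_q$ yields maximally non-hamiltonian graphs of maximum degree $n-1$ that are not platypuses, so the corollary is genuinely a statement about the two intermediate degrees. There is no real obstacle here: all the substantive work has already been carried out in the proof of Theorem~\ref{thm:maxdeg}, where maximum degree $n-2$ and $n-3$ are ruled out for platypuses, and the present corollary merely transfers that conclusion to the class of maximally non-hamiltonian graphs through Lemma~\ref{lemma:maxdeg}.
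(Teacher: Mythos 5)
Your proof is correct and is exactly the paper's argument: the paper derives the corollary by ``combining Lemma~\ref{lemma:maxdeg} with Theorem~\ref{thm:maxdeg}'', which is precisely the two-step deduction you give (maximum degree $n-2$ or $n-3$ is below $n-1$, so the graph is a platypus, contradicting the $\Delta \le n-4$ bound). Your remark about why $n-1$ is genuinely excluded from the statement is a nice additional observation but not needed for the proof.
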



We end this section by commenting on another theorem of Chartrand, Gould, and Kapoor~\cite[Theorem~2]{CGK79}: they showed that if $G$ is a homogeneously traceable non-hamiltonian graph, then every two vertices of degree $n - 4$ are adjacent. Using an approach similar to theirs, we can extend the result and show that this holds for platypuses, as well.

\begin{theorem}
In a platypus of order $n$, any two vertices of degree $n - 4$ are adjacent.
\end{theorem}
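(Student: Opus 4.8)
The plan is to mimic the structure of the $\Delta(G) = n-3$ analysis in the proof of Theorem~\ref{thm:maxdeg}, but now applied to two vertices of degree $n-4$ simultaneously. Suppose, for contradiction, that $G$ is a platypus of order $n$ containing two non-adjacent vertices $v$ and $v'$, each of degree $n-4$. Since $G$ is a platypus, $G - v$ has a hamiltonian path $\mathfrak{p}_v$; write $\mathfrak{p}_v = v_1 \dots v_{n-1}$. Because $\deg(v) = n-4$, there are exactly three vertices of $\mathfrak{p}_v$ not adjacent to $v$; one of them is $v'$ (since $vv' \notin E(G)$), and I will call the other two $a$ and $b$. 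If none of $v'$, $a$, $b$ is an end-vertex of $\mathfrak{p}_v$, then closing up $\mathfrak{p}_v$ with the path $v_1 v v_{n-1}$ yields a hamiltonian cycle, a contradiction; so at least one of the three non-neighbours of $v$ is an end-vertex of $\mathfrak{p}_v$.

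The argument then proceeds by case analysis on how many of the end-vertices $v_1, v_{n-1}$ lie among $\{v', a, b\}$ and, in the borderline configurations, on the locations of the ``rescue'' neighbours of the offending end-vertices along $\mathfrak{p}_v$. The key move, used repeatedly in Theorem~\ref{thm:maxdeg}, is this: if $v_1$ is a non-neighbour of $v$ and $v_1$ has a neighbour $v_i$ on $\mathfrak{p}_v$ with $i \neq 2$, then one can splice the segments $\mathfrak{p}_v[v_1, v_i]$ reversed, $\mathfrak{p}_v[v_i, v_{n-1}]$, and (if $v_{n-1}$ is adjacent to $v$) the edge $v_{n-1} v$ together with $v v_2$ and the edge $v_2 v_1$-type reversals, to obtain a hamiltonian cycle. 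Since $G$ is $2$-connected, such ``off-diagonal'' neighbours exist. The subtlety — and here the bookkeeping is genuinely heavier than in Theorem~\ref{thm:maxdeg} — is that with \emph{three} non-neighbours of $v$ rather than two, the obstructions to these splicings can compound: an end-vertex may fail to be rescued precisely because its only off-diagonal neighbour is one of the other non-neighbours of $v$, sitting in exactly the wrong place. When the naive splicing fails, I would do what the proof of Theorem~\ref{thm:maxdeg} does in its final cases: pass to a \emph{different} vertex-deleted subgraph. Concretely, if the above is blocked because, say, $v_1$'s only useful neighbour is some specific $v_k$, delete $v_k$ instead, use the hamiltonian path $\mathfrak{p}_{v_k}$ of $G - v_k$, observe that its end-vertices cannot be a pair that would immediately close up, and extract the needed chord from $2$-connectivity — this is exactly the $\mathfrak{p}_y$ / $\mathfrak{p}_{v_2}$ trick in the earlier proof.

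At some point the degree-$(n-4)$ hypothesis on the \emph{second} vertex $v'$ must be brought in: in the worst residual configuration, where $v' = v_1$ (or $v' = v_{n-1}$) and all the path-chord options for $v$ are exhausted, I would delete $v'$ and work with a hamiltonian path $\mathfrak{p}_{v'}$ of $G - v'$. Now $v'$ has exactly three non-neighbours on $\mathfrak{p}_{v'}$ — necessarily $v$ among them — and the structural information we already extracted about the neighbourhoods of $v_1, \dots, v_{n-1}$ near $v$ constrains where these three non-neighbours sit; combining the two hamiltonian paths $\mathfrak{p}_v$ and $\mathfrak{p}_{v'}$ on the common vertex set $V(G) \setminus \{v\} \cap V(G) \setminus \{v'\}$ forces a hamiltonian cycle.

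The main obstacle I anticipate is purely the combinatorial explosion of cases: whereas the $\Delta = n-3$ argument had five essentially different sub-cases, the two-vertices-of-degree-$(n-4)$ setting has more, because the positions of three distinguished non-neighbours of $v$ (relative to the end-vertices of $\mathfrak{p}_v$) and then three of $v'$ must be tracked, and the rescue-by-deleting-another-vertex step may have to be iterated. The way to keep this manageable is to isolate one reusable lemma — ``if $v$ is a vertex of a platypus and $\mathfrak{p}_v = v_1\dots v_{n-1}$ is a hamiltonian path of $G-v$, and $v_1$ is not adjacent to $v$, then either $G$ has a hamiltonian cycle or every neighbour of $v_1$ on $\mathfrak{p}_v$ other than $v_2$ is itself a non-neighbour of $v$ lying in a prescribed segment'' — and apply it symmetrically at both ends and for both $v$ and $v'$, so that the final contradiction is assembled from a small number of such normal-form statements rather than from an unstructured case-split.
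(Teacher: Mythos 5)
Your proposal is a strategy outline rather than a proof: the entire content of the argument --- the case analysis on which of the three non-neighbours of $v$ are end-vertices of $\mathfrak{p}_v$, the splicing arguments, and the iterated ``delete another vertex'' rescues --- is left undone, and you yourself flag that the obstructions ``can compound'' and that the bookkeeping is heavier than in Theorem~\ref{thm:maxdeg}. Nothing in the proposal shows that this case analysis actually closes, and the ``reusable lemma'' you gesture at in the last paragraph is not stated precisely enough (what is the ``prescribed segment''?) to be checked. So as written there is a genuine gap: the key combinatorial step is missing, and there is no evidence the explosion of cases terminates in a contradiction in every branch.

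The paper avoids all of this with two ideas your plan does not contain. First, instead of starting from a hamiltonian path of $G-v$, it obtains a hamiltonian $vw$-path directly: if no such path existed, then $G+vw$ would still be non-hamiltonian, all its vertex-deleted subgraphs would still be traceable, so $G+vw$ would be a platypus with a vertex of degree $n-3$ --- contradicting the bound $\Delta(G)\le n-4$ of Theorem~\ref{thm:maxdeg}. (This is where the first theorem does its work; your sketch never exploits it this way.) Second, writing the hamiltonian $vw$-path as $vv_2\dots v_{n-1}w$, a short pigeonhole count shows that some $\ell$ satisfies $vv_\ell,\,wv_{\ell-1}\in E(G)$: otherwise the three neighbours $v_i,v_j,v_k$ of $w$ of smallest index (which exist since $\deg(w)=n-4\ge 5$) would force three distinct vertices $v_{i+1},v_{j+1},v_{k+1}$ lying outside $\{v\}\cup\{w\}\cup N(v)$, a set of size only $2$ because $\deg(v)=n-4$. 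Such a pair of ``good'' edges immediately yields the hamiltonian cycle $vv_2\dots v_{\ell-1}wv_{n-1}\dots v_\ell v$, a contradiction. If you wish to salvage your route you must actually exhibit and verify the full case split; the $G+vw$ trick combined with the counting argument is the intended, and far shorter, path.
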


\begin{proof}
Assume there exists a platypus $G$ of order $n$ with two non-adjacent vertices $v$ and $w$, each of degree $n - 4$. We first show that there exists a hamiltonian $vw$-path in $G$. Assume there is no such path. Then $G + vw$ is a platypus of order $n$ and maximum degree $n-3$, in contradiction with Theorem~\ref{thm:maxdeg}.

So $G$ contains a hamiltonian $vw$-path which we write as $v v_2 v_3 ...v_{n-2} v_{n-1} w$. Like Chartrand, Gould, and Kapoor, we now use the fact that $G$ must contain edges $vv_\ell$ and $wv_{\ell-1}$ for an appropriate $\ell \in \{ 3, ..., n-1 \}$. We call, ad hoc, such edges \emph{good} and prove the aforementioned fact by assuming that $G$ contains no good edges.

Consider $v_i \in N(w)$ with $i$ minimal. Since $G$ contains no good edges, $v_{i+1} \notin N(v)$. Then there exists a minimal $j > i$ such that $v_j \in N(w)$. Again, $v_{j+1} \notin N(v)$. Finally, there must exist a minimal $k > j$ such that $v_k \in N(w)$ (since $|N(w)| = n - 4 \ge 5$). Once more, $v_{k+1} \notin N(v)$. Since $v v_2 v_3 ...v_{n-2} v_{n-1} w$ is a path and $i < j < k$, the vertices $v_{i+1}, v_{j+1}, v_{k+1}$ must be pairwise distinct. However, this is impossible: we have $\{ v_{i+1}, v_{j+1}, v_{k+1} \} \subset V(G) \setminus ( \{ v \} \cup \{ w \} \cup N(v) )$, but the latter set has only $n - (n-2) = 2$ elements. We have proven that there always exist good edges $vv_\ell$ and $wv_{\ell-1}$. But then $v v_2 ... v_{\ell - 1} w v_{n-1} ... v_{\ell} v$ is a hamiltonian cycle in $G$, a contradiction.
\end{proof}

\section{Computational results}
\label{sect:computations}

It is straightforward to modify our generation algorithm for hypohamiltonian graphs from~\cite{GZ17} to generate all pairwise non-isomorphic platypuses of a given order. The details of this modified generation algorithm for platypuses can be found in the master's thesis of the second author~\cite{Ne17}.

Our implementation of this algorithm is incorporated in our generator for hypohamiltonian graphs and can be downloaded from~\cite{genhypo-site}. Using this, we were able to generate complete lists of platypuses for various orders and lower bounds on the girth. The counts of these graphs are given in Table~\ref{table:counts_platypuses}. All graphs from this table can also be downloaded from the \textit{House of Graphs}~\cite{hog} at \url{http://hog.grinvin.org/Platypus}

		\begin{table}[!htb]
			\centering
			\begin{tabular}{cccccccc}
				\toprule
				Order & \# platypuses & $g \geq 4$ & $ g \geq 5$ & $ g \geq 6$ & $ g \geq 7$ & $ g \geq 8$ & $ g \geq 9$\\
				\midrule
				$0-8$ & $0$        & $0$    & $0$     & $0$    & $0$  & $0$       & $0$  \\
				$9$   & $4$        & $0$    & $0$     & $0$    & $0$  & $0$       & $0$  \\
				$10$  & $48$       & $2$    & $2$     & $0$    & $0$  & $0$       & $0$  \\
				$11$  & $814$      & $4$    & $3$     & $0$    & $0$  & $0$       & $0$  \\
				$12$  & $24847$    & $48$   & $7$     & $1$    & $0$  & $0$       & $0$  \\
				$13$  & ?          & $319$  & $27$    & $1$    & $0$  & $0$       & $0$  \\
				$14$  & ?          & $6623$ & $161$   & $2$    & $0$  & $0$       & $0$  \\
				$15$  & ?          & ?      & $934$   & $1$    & $0$  & $0$       & $0$  \\
				$16$  & ?          & ?      & $7674$  & $9$    & $1$  & $0$       & $0$  \\
				$17$  & ?          & ?      & $82240$ & $53$   & $0$  & $0$       & $0$  \\
				$18$  & ?          & ?      & ?       & $277$  & $0$  & $0$       & $0$  \\
				$19$  & ?          & ?      & ?       & $1161$ & $0$  & $0$       & $0$  \\
				$20$  & ?          & ?      & ?       & $7659$ & $5$  & $0$       & $0$  \\
				$21$  & ?          & ?      & ?       & ?      & $35$ & $0$       & $0$  \\
				$22$  & ?          & ?      & ?       & ?      & ?    & $1$       & $0$  \\
				$23$  & ?          & ?      & ?       & ?      & ?    & $1$       & $0$  \\
				$24$  & ?          & ?      & ?       & ?      & ?    & $5$       & $0$  \\
				\bottomrule
				\end{tabular}
				\caption{The number of platypuses. The columns with a header of the form $g \geq k$ contain the number of platypuses with girth at least~$k$. Note that we do know of platypuses of girths 9, ..., 16, see Theorem~\ref{thm:girth}}
				\label{table:counts_platypuses}
				\end{table}

The algorithm from~\cite{Ne17} was implemented in two independent ways and all results we obtained with it were confirmed by both implementations. Next to that, as an additional correctness test we independently verified all results for the smaller orders by using the generator \verb|geng|~\cite{nauty-website, mckay_14} to generate all graphs and then filtering the platypuses.

The four smallest platypuses, each of girth~3, were given in~\cite{Za17}. The smallest platypus of girth~4 is shown in Figure~\ref{fig:platypus_g4} (and has order~11). The smallest platypuses of girth~5 are Petersen's graph $P$ and $P$ minus an edge. A platypus of girth 6 was published in Figure~1b from~\citep{Za17} and through Table~\ref{table:counts_platypuses} we here prove that this platypus is the smallest one of girth~6. The smallest platypuses of girth 7 and 8 are shown in Figure~\ref{fig:platypus_g7} and~\ref{fig:platypus_g8}, respectively. Note that the former is a so-called modified dotted prism, defined in~\cite{Za17}, in which every 3-ear was replaced with a 4-ear.

\begin{figure}[h!tb]
	\centering
	\includegraphics[width=0.25\textwidth]{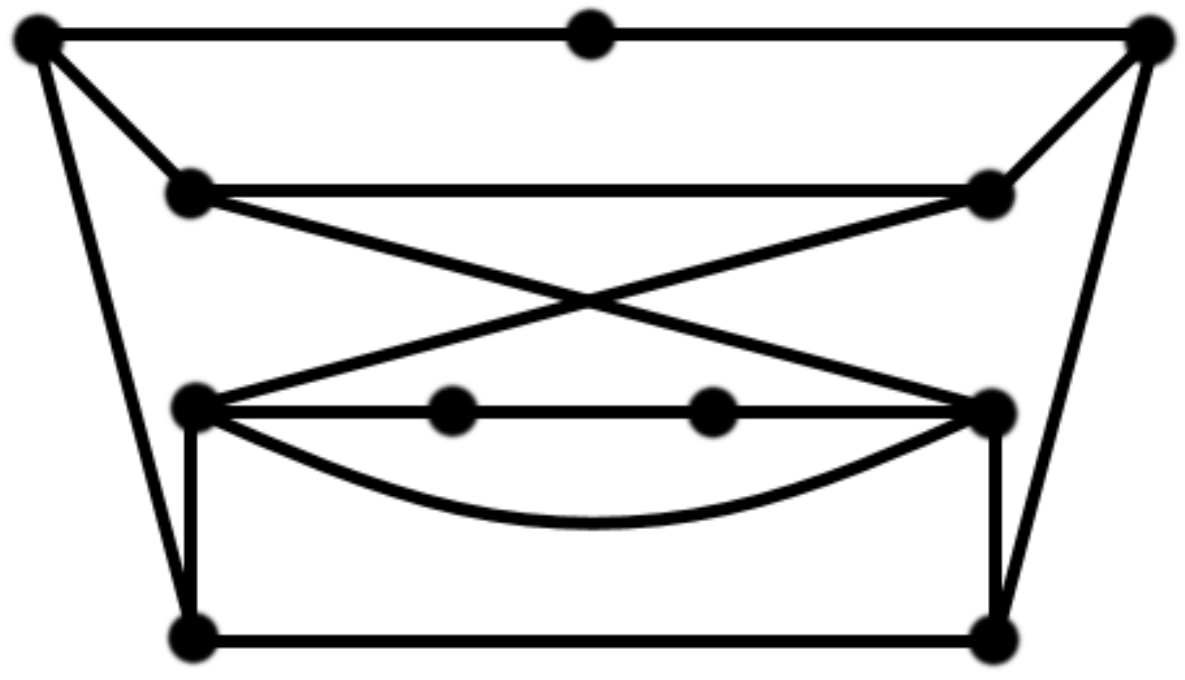}
    \caption{The smallest platypus of girth 4. It has order 11.}
    \label{fig:platypus_g4}
\end{figure}

\begin{figure}[h!tb]
    \centering
   \subfloat[]{\label{fig:platypus_g7}\includegraphics[width=0.22\textwidth, angle=90]{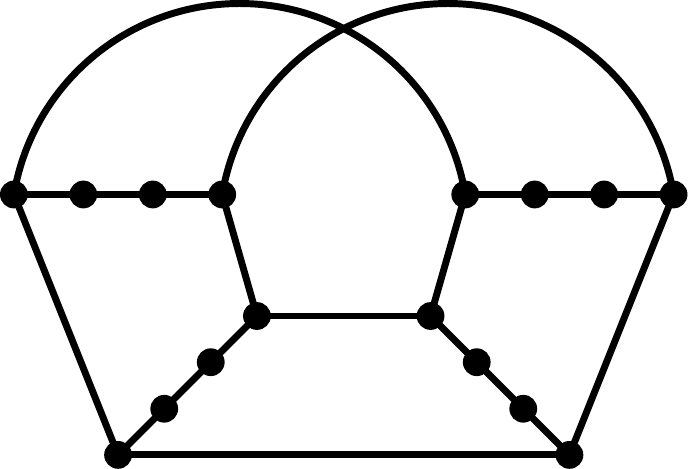}} \qquad
    \subfloat[]{\label{fig:platypus_g8}\includegraphics[width=0.3\textwidth]{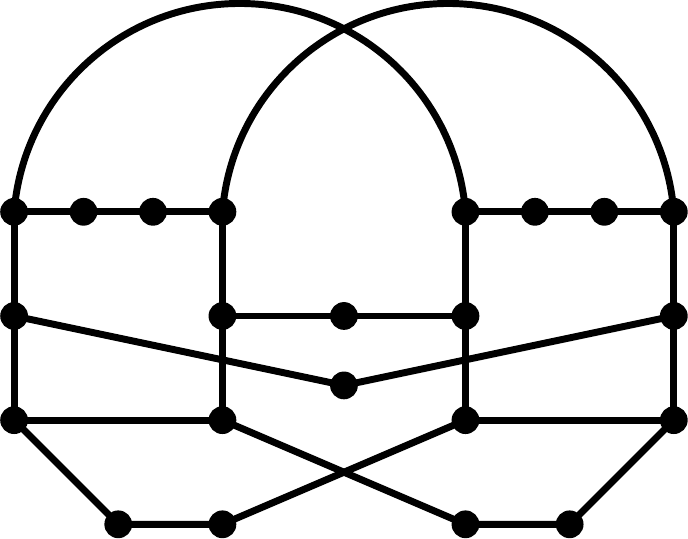}}
    \caption{(a) The smallest platypus of girth 7 and (b) girth 8. Their order is 16 and 22, respectively.}
\end{figure}

\section{On two problems of Zamfirescu}
\label{sect:problems}


The third author observed~\cite{Za17} that if a platypus contains a 4-ear, then this 4-ear can be replaced with a 3-ear, and the resulting graph is still a platypus. In the same paper, he raised the question whether this holds in the inverse direction, as well. We now show that it does not.

\begin{proposition}
There exists a platypus containing a $3$-ear, whose replacement with a $4$-ear does not yield a platypus.
\end{proposition}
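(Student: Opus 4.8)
The plan is to exhibit a small explicit example. Since we need a platypus $G$ with a $3$-ear $P$ such that replacing $P$ by a $4$-ear destroys the platypus property, and since the only way the resulting graph $G'$ can fail to be a platypus is that it becomes hamiltonian (it cannot stop satisfying the vertex-deletion condition — adding vertices on an ear and checking traceability of vertex-deleted subgraphs goes through in the standard way, as in Lemma~\ref{lemma:transformation}), I would look for a $G$ where the $3$-ear $P$ has end-vertices $v,w$ with the property that $G$ itself has \emph{no} hamiltonian $vw$-path, but every hamiltonian-path/cycle obstruction is ``just barely'' avoided — so that adding one vertex to the ear changes the parity or length in a way that now admits a spanning cycle. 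The cleanest candidates are the dotted prisms $\dot{C_n}$ over odd cycles (platypuses by~\cite{Za17}), where each spoke is a $3$-ear; replacing \emph{all} of them gives ${\rm PP}(n,1)$, still a platypus, but replacing a single one may not be, and that is precisely the phenomenon to pin down.

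Concretely, I would take $G = \dot{C_3}$, the smallest platypus (a triangle with each of the three $P_2$'s subdivided, i.e.\ the prism $K_3 \square P_2$ with each rung replaced by a $P_3$). Each of its three spokes is a $3$-ear. Label the top triangle $u_0u_1u_2$, the bottom triangle $v_0v_1v_2$, and let $d_i$ be the subdivision vertex on spoke $i$, so the ear is the path $u_i d_i v_i$. Replacing the $3$-ear $u_0 d_0 v_0$ with a $4$-ear means inserting a second subdivision vertex, giving a path $u_0 d_0 d_0' v_0$. Call the new graph $G'$. I would then check two things by direct inspection: (i) $G$ is non-hamiltonian — it has a $3$-edge-cut (the three spokes) separating the two triangles-with-one-extra-vertex, and any hamiltonian cycle would have to use an even number of spoke-paths, but there are three of them, each of which must be fully traversed to cover the $d_i$; hence no hamiltonian cycle, and this is exactly the argument of Lemma~\ref{lemma:pp} specialised to $n=3$, $k=1$; (ii) $G'$ \emph{is} hamiltonian — because now spoke $0$ contributes two internal vertices and one can route the cycle so as to use spokes $1$ and $2$ fully while entering and leaving the ``$u_0$--$d_0$--$d_0'$--$v_0$'' region appropriately; I would simply write down the cycle $u_0 u_1 v_1 v_2 u_2$ (through the respective spokes) together with the detour $u_0 d_0 d_0' v_0$ and verify it spans. (If $\dot{C_3}$ turns out not to work because of its small size, the same reasoning applied to $\dot{C_5}$ or to a dotted prism over a suitable odd graph will; the point is that the parity obstruction of Lemma~\ref{lemma:pp} is what one is playing against.)

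The remaining content is routine: having found $G'$ hamiltonian, the Proposition follows, since then $G'$ is not a platypus even though $G$ is, and $G'$ is obtained from $G$ by replacing a $3$-ear with a $4$-ear. I would present the example with a small figure and the explicit hamiltonian cycle of $G'$, and remark that the non-hamiltonicity of $G$ is immediate from Lemma~\ref{lemma:pp} (or a one-line parity argument). The main obstacle is essentially bookkeeping: making sure the chosen $G$ is genuinely non-hamiltonian (so that $G$ really is a platypus and not accidentally hamiltonian) while the single-ear-lengthened $G'$ acquires a hamiltonian cycle — i.e.\ choosing the base graph so that exactly one parity flip tips it over. I expect $\dot{C_3}$ (or at worst a dotted prism over a small odd cycle) to do the job, making the whole argument a short verification.
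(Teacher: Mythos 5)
Your proposal fails on two counts, and the first is a reversal of the actual logic. You assert that the only way $G'$ (obtained by replacing a $3$-ear $vxw$ with a $4$-ear $vxx'w$) can fail to be a platypus is by becoming hamiltonian, the vertex-deletion condition being ``automatic''. It is exactly the other way around: any hamiltonian cycle of $G'$ must traverse the whole ear $vxx'w$ (its internal vertices have degree~$2$), so suppressing $x'$ projects it onto a hamiltonian cycle of $G$ through $vxw$ --- hence $G'$ is \emph{never} hamiltonian when $G$ is not. The property that can be lost is traceability of a vertex-deleted subgraph: $G'-x$ needs a hamiltonian path of $G-x$ ending at $w$ (since $x'$ becomes pendant on $w$), and $G'-x'$ needs one ending at $v$; mere traceability of $G-x$ does not guarantee either. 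So the example you must hunt for is one where these endpoint-constrained paths fail to exist, not one where a parity obstruction to hamiltonicity ``tips over''.

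The second problem is that your concrete candidate is ruled out by a result quoted in this very section: it was shown in~\cite{Za17} that replacing \emph{any} number of $3$-ears with $4$-ears in a dotted prism over an odd cycle yields a platypus, so $\dot{C_3}$, $\dot{C_5}$, and all your fallback candidates cannot work. Consistently with the projection argument above, the graph $G'$ obtained from $\dot{C_3}$ by lengthening one spoke is still non-hamiltonian --- the parity argument of Lemma~\ref{lemma:pp} depends only on there being an odd number of spokes, each forced to be fully traversed, and is indifferent to how many internal vertices a spoke has. Your claimed hamiltonian cycle does not close up: after $u_0u_1d_1v_1v_2d_2u_2$ the only unused neighbour of $u_2$ is $u_0$, stranding $v_0,d_0,d_0'$. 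The paper instead exhibits an explicit $12$-vertex platypus (Figure~\ref{fig:ear}) with a $3$-ear whose lengthening destroys traceability of a vertex-deleted subgraph; no member of your proposed family can replace it.
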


\begin{proof}
See Figure~\ref{fig:ear}. We leave to the reader the straightforward verification that the graph shown on the left-hand side of Figure~\ref{fig:ear} is a platypus, but the graph shown on the right-hand side is not.
\end{proof}

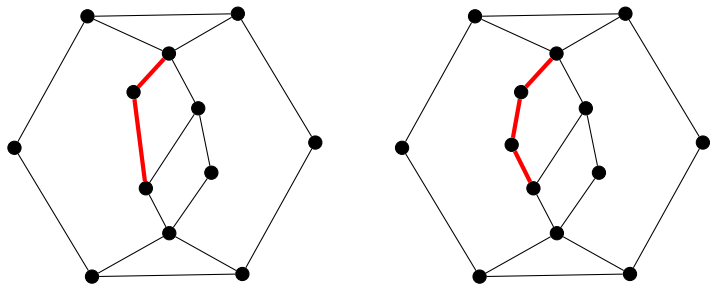
\begin{figure}[h!tb]
	\centering
\begin{tikzpicture}[every node/.style={circle, draw, inner sep=1pt,
minimum size=5pt},every edge/.style={draw},scale=0.04]
    \definecolor{marked}{rgb}{0.25,0.5,0.25}
    \node [circle,fill] (13) at (39.587416,67.641954) {};
    \node [circle,fill] (12) at (51.467468,20.702965) {};
    \node [circle,fill] (11) at (51.345873,80.474863) {};
    \node [circle,fill] (10) at (43.671669,35.637131) {};
    \node [circle,fill] (9) at (25.772817,6.267250) {};
    \node [circle,fill] (8) at (99.999999,50.861034) {};
    \node [circle,fill] (7) at (24.261332,92.860197) {};
    \node [circle,fill] (6) at (65.448953,40.833281) {};
    \node [circle,fill] (4) at (75.761300,7.139802) {};
    \node [circle,fill] (3) at (0.000000,49.115528) {};
    \node [circle,fill] (2) at (74.249814,93.732749) {};
    \node [circle,fill] (1) at (61.088539,62.256461) {};    
    \draw [ultra thick, red] (13) to (11);
    \draw [black] (12) to (4);
    \draw [black] (12) to (9);
    \draw [black] (12) to (10);
    \draw [black] (12) to (6);
    \draw [black] (11) to (1);
    \draw [black] (11) to (7);
    \draw [black] (11) to (2);
    \draw [black] (10) to (1);
    \draw [ultra thick, red] (10) to (13);
    \draw [black] (9) to (4);
    \draw [black] (9) to (3);
    \draw [black] (8) to (2);
    \draw [black] (8) to (4);
    \draw [black] (7) to (3);
    \draw [black] (7) to (2);
    \draw [black] (6) to (1);
\end{tikzpicture}
\qquad
\begin{tikzpicture}[every node/.style={circle, draw, inner sep=1pt,
minimum size=5pt},every edge/.style={draw},scale=0.04]
    \definecolor{marked}{rgb}{0.25,0.5,0.25}
    \node [circle,fill] (13) at (39.587416,67.641954) {};
    \node [circle,fill] (12) at (51.467468,20.702965) {};
    \node [circle,fill] (11) at (51.345873,80.474863) {};
    \node [circle,fill] (10) at (43.671669,35.637131) {};
    \node [circle,fill] (9) at (25.772817,6.267250) {};
    \node [circle,fill] (8) at (99.999999,50.861034) {};
    \node [circle,fill] (7) at (24.261332,92.860197) {};
    \node [circle,fill] (6) at (65.448953,40.833281) {};
    \node [circle,fill] (5) at (36.437039,50.115528) {};
    \node [circle,fill] (4) at (75.761300,7.139802) {};
    \node [circle,fill] (3) at (0.000000,49.115528) {};
    \node [circle,fill] (2) at (74.249814,93.732749) {};
    \node [circle,fill] (1) at (61.088539,62.256461) {};
    \draw [ultra thick, red] (13) to (5);
    \draw [ultra thick, red] (13) to (11);
    \draw [black] (12) to (4);
    \draw [black] (12) to (9);
    \draw [black] (12) to (10);
    \draw [black] (12) to (6);
    \draw [black] (11) to (1);
    \draw [black] (11) to (7);
    \draw [black] (11) to (2);
    \draw [black] (10) to (1);
    \draw [ultra thick, red] (10) to (5);
    \draw [black] (9) to (4);
    \draw [black] (9) to (3);
    \draw [black] (8) to (2);
    \draw [black] (8) to (4);
    \draw [black] (7) to (3);
    \draw [black] (7) to (2);
    \draw [black] (6) to (1);
\end{tikzpicture}

 	\caption{There exist platypuses containing a 3-ear (left-hand side), the replacement of which with a 4-ear yields a graph which is not a platypus (right-hand side). The 3-ear which is replaced by a 4-ear is marked in bold red.}
	\label{fig:ear}
\end{figure}

The third author defined in~\cite{Za17} the number $\psi_k$ $\left(\bar{\psi}_k\right)$ as the order of the smallest platypus (smallest planar platypus) of connectivity~$k$, where, naturally, the polyhedral case $\bar{\psi}_3$ is of special interest. In~\cite{Za17} it was proven that $\bar{\psi}_3 \le 25$. In the following, we provide the first non-trivial lower bound and improve the upper bound for $\bar{\psi}_3$.

Using the program \verb|plantri|~\cite{BM05,BM07} we generated all planar 3-connected graphs with a given lower bound on the girth and tested (using two independent programs) which of the resulting graphs are platypuses. This yielded the following results.

\begin{theorem} \label{claim:platypus_polyhedral}
The following holds:
\begin{enumerate}[(i)]
\item The smallest planar $3$-connected platypus graph has at least $18$ vertices.
\item The smallest planar $3$-connected platypus graph of girth $4$ has $21$ vertices and is shown in Figure~\ref{fig:planar_3conn_g4}.
\item The smallest planar $3$-connected platypus graph of girth $5$ has $28$ vertices and is shown in Figure~\ref{fig:planar_3conn_g5}.
\end{enumerate}
\end{theorem}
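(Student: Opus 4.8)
The plan is to establish all three parts by an exhaustive computer search over polyhedral (i.e.\ planar and $3$-connected) graphs, exactly as indicated in the paragraph preceding the statement. For a prescribed girth lower bound $g_0\in\{3,4,5\}$ and a prescribed bound on the order, I would use \texttt{plantri}~\cite{BM05,BM07} to produce a complete, irredundant list of all $3$-connected planar graphs of girth at least $g_0$ up to that order (for $3$-connected graphs the plane embedding is essentially unique, so this is exactly the list of isomorphism types of such graphs), and then apply the platypus test to each of them: first decide whether the graph $G$ is hamiltonian and, if so, discard it; otherwise decide, for every vertex $v$, whether $G-v$ is traceable, where traceability of $G-v$ is tested by adding a universal vertex and reusing the hamiltonicity routine. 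Then $G$ is a platypus precisely when the first test fails and all $|V(G)|$ traceability tests succeed, and for the orders at hand these tests are essentially instantaneous. As elsewhere in the paper, I would run the platypus test as two independent programs and require agreement, and, where the counts permit, cross-check the generated lists against the output of \texttt{geng}~\cite{nauty-website,mckay_14}.

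For (i) I would carry out this search for every girth and every order up to $17$ and verify that nothing produced is a platypus, giving $\bar{\psi}_3\ge 18$. Two results already proved in the paper keep the search manageable: a planar platypus has girth at most $9$, so only the girths $3,4,\dots,9$ need to be treated, and by Theorem~\ref{thm:maxdeg} every $n$-vertex platypus satisfies $\Delta(G)\le n-4$, so any generated graph with a vertex of degree $n-3$ or more can be dropped immediately. The overwhelming majority of the polyhedra produced are hamiltonian and are removed by the (very fast) first test, so the amortised cost per graph is tiny; nevertheless, the girth-$3$ case on $17$ vertices completely dominates the computation (there are several trillion polyhedra on $17$ vertices), and this is the genuine bottleneck. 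I would address it by brute force: a tuned hamiltonicity routine together with the straightforward parallelisation that \texttt{plantri}'s streaming output allows.

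For (ii) and (iii) the girth restriction shrinks the candidate set by many orders of magnitude, making these searches comparatively light. For (ii) I would generate all $3$-connected planar graphs of girth at least $4$ on at most $21$ vertices, check that none on at most $20$ vertices is a platypus, and exhibit (at least) one platypus on $21$ vertices, namely the graph in Figure~\ref{fig:planar_3conn_g4}; I would then let the two testing programs independently confirm that this displayed graph is planar, $3$-connected, of girth exactly $4$, and a platypus. For (iii) I would proceed the same way with girth at least $5$ and order at most $28$, obtaining that the smallest such platypus has $28$ vertices and is the graph in Figure~\ref{fig:planar_3conn_g5}, whose girth is exactly $5$; checking all girths $5,6,7,8,9$ (rather than only girth exactly $5$) up to order $28$ is what rules out a smaller planar $3$-connected platypus of larger girth. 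As a by-product, the $21$-vertex graph of (ii) is itself a planar $3$-connected platypus, so $\bar{\psi}_3\le 21$, improving the bound $\bar{\psi}_3\le 25$ of~\cite{Za17}; combined with (i) this yields $18\le\bar{\psi}_3\le 21$.

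The main obstacle is computational rather than conceptual: the number of polyhedra on $17$ vertices is far too large to enumerate casually, so the feasibility of (i) rests on the raw speed of \texttt{plantri}, on eliminating hamiltonian graphs essentially for free, and on the modest pruning afforded by Theorem~\ref{thm:maxdeg} and by the fact that a planar platypus has girth at most $9$. Parts (ii) and (iii) become routine once that girth bound has thinned the candidates; the only point that requires care there is verifying that the exhibited small graphs have exactly the claimed girth.
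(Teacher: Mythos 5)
Your proposal is correct and follows essentially the same route as the paper: the authors likewise prove all three parts by generating the relevant planar $3$-connected graphs with \texttt{plantri} up to the stated orders and girth bounds and then filtering for platypuses with two independent testing programs. Your additional remarks on pruning via the maximum-degree bound and the girth-at-most-$9$ result, and on the computational bottleneck at order $17$, are sensible elaborations but do not change the method.
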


This implies the following bounds for $\bar{\psi}_3$.

\begin{theorem} \label{thm:polyhedral}
We have that $18 \le \bar{\psi}_3 \le 21$.
\end{theorem}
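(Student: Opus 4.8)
The plan is to obtain Theorem~\ref{thm:polyhedral} as an immediate corollary of Theorem~\ref{claim:platypus_polyhedral}. Recall that $\bar{\psi}_3$ is, by definition, the order of the smallest polyhedral (i.e.\ planar and $3$-connected) platypus. First I would read off the lower bound: part~(i) of Theorem~\ref{claim:platypus_polyhedral} asserts that every planar $3$-connected platypus has at least $18$ vertices, which is exactly the statement $\bar{\psi}_3 \ge 18$. Next I would read off the upper bound from part~(ii): it produces an explicit planar $3$-connected platypus of girth $4$ on $21$ vertices (the graph depicted in Figure~\ref{fig:planar_3conn_g4}), and the existence of a single such graph already yields $\bar{\psi}_3 \le 21$. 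Concatenating the two inequalities gives $18 \le \bar{\psi}_3 \le 21$, completing the argument.

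The one point worth flagging is that the $21$-vertex witness supplied by Theorem~\ref{claim:platypus_polyhedral}(ii) has girth $4$ rather than girth $3$; since $\bar{\psi}_3$ carries no girth restriction in its definition, this is immaterial to the bound, and I would simply note that the witness is polyhedral and has $21$ vertices without dwelling on its girth.

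I do not expect any real obstacle here: all of the substance — the exhaustive \texttt{plantri} generation of small planar $3$-connected graphs together with the subsequent platypus filtering — is already discharged inside Theorem~\ref{claim:platypus_polyhedral}. The present statement is the bookkeeping step that repackages those computational facts as an enclosing interval for $\bar{\psi}_3$, so I anticipate the proof to be a one- or two-sentence deduction rather than anything that requires a new idea.
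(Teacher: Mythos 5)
Your proposal is correct and matches the paper exactly: the paper states Theorem~\ref{thm:polyhedral} as an immediate consequence of Theorem~\ref{claim:platypus_polyhedral}, with the lower bound coming from part~(i) and the upper bound from the $21$-vertex girth-$4$ witness of part~(ii). Your remark that the girth of the witness is irrelevant to the definition of $\bar{\psi}_3$ is also accurate.
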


The smallest planar 3-connected platypus of girth 3 known so far has 25 vertices and is depicted in Figure~4 of~\cite{Za17}. This gives us the following bounds for the smallest planar 3-connected platypus of girth 3.

\begin{corollary}
The smallest planar $3$-connected platypus graph with girth $3$ has at least $18$ and at most $25$ vertices.
\end{corollary}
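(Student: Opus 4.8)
The plan is simply to assemble two bounds that are already within reach. For the lower bound, I would note that a planar $3$-connected platypus of girth $3$ is in particular a planar $3$-connected platypus, so Theorem~\ref{claim:platypus_polyhedral}(i) applies verbatim and yields that such a graph has at least $18$ vertices. No girth-specific reasoning is required here: one invokes the more general statement, since imposing girth exactly $3$ only restricts the class further.

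For the upper bound, I would exhibit an explicit planar $3$-connected platypus of girth $3$ on $25$ vertices. Such a graph is already available: the graph depicted in Figure~4 of~\cite{Za17} is planar and $3$-connected, contains a triangle (hence has girth $3$), and is a platypus. Consequently the smallest planar $3$-connected platypus of girth $3$ has order at most $25$. Combining this with the lower bound gives the stated interval $[18,25]$.

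There is no real obstacle left in the argument itself—the substantive work sits in Theorem~\ref{claim:platypus_polyhedral}(i)—but it is worth flagging that the lower bound of $18$ is not sharpened to the girth-$3$ setting. Theorem~\ref{claim:platypus_polyhedral}(i) comes from generating planar $3$-connected graphs with a prescribed lower bound on the girth and filtering for platypuses; for girth $3$ the number of such graphs grows too rapidly to extend the exhaustive check far enough to improve the bound within a reasonable amount of computing time. Thus $[18,25]$ is essentially what the current (computational) approach delivers, and narrowing this gap would require either a structural argument excluding small girth-$3$ examples or a markedly more efficient generation procedure.
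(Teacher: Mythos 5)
Your proposal is correct and follows essentially the same route as the paper: the lower bound of $18$ is inherited from Theorem~\ref{claim:platypus_polyhedral}(i) since girth-$3$ examples form a subclass of all planar $3$-connected platypuses, and the upper bound comes from the known $25$-vertex example in Figure~4 of~\cite{Za17}. Nothing further is needed.
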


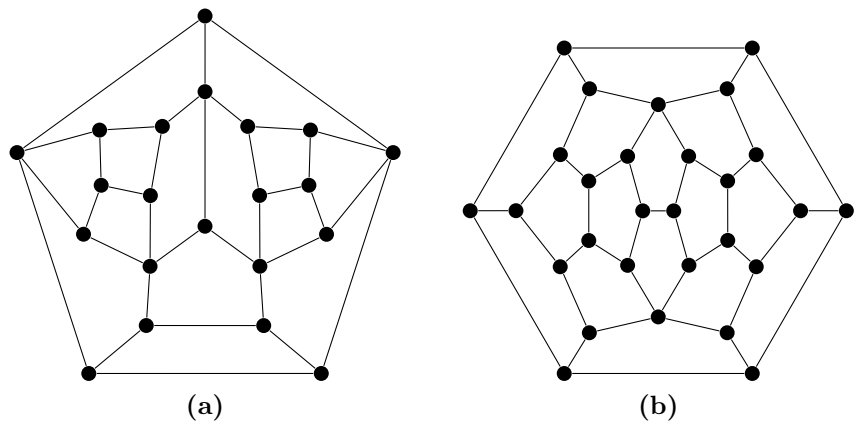
\begin{figure}[h!tb]
	\centering
\subfloat[]{\label{fig:planar_3conn_g4}\begin{tikzpicture}[scale=0.05, inner sep=2pt]
    \definecolor{marked}{rgb}{0.25,0.5,0.25}
    \node [circle,fill] (21) at (22.378600,52.523424) {};
    \node [circle,fill] (20) at (35.461542,49.788391) {};
    \node [circle,fill] (19) at (17.680894,39.494396) {};
    \node [circle,fill] (18) at (21.955510,67.203605) {};
    \node [circle,fill] (17) at (38.643556,68.192232) {};
    \node [circle,fill] (16) at (35.368859,30.961922) {};
    \node [circle,fill] (15) at (34.359609,15.208089) {};
    \node [circle,fill] (14) at (19.080609,2.444254) {};
    \node [circle,fill] (13) at (0.000000,61.224905) {};
    \node [circle,fill] (12) at (49.986908,77.427577) {};
    \node [circle,fill] (11) at (49.977255,41.626327) {};
    \node [circle,fill] (10) at (65.589381,15.190900) {};
    \node [circle,fill] (9) at (80.895282,2.451042) {};
    \node [circle,fill] (8) at (49.990670,97.555745) {};
    \node [circle,fill] (7) at (61.337746,68.191034) {};
    \node [circle,fill] (6) at (64.574428,30.975874) {};
    \node [circle,fill] (5) at (99.999999,61.221841) {};
    \node [circle,fill] (4) at (78.022712,67.225041) {};
    \node [circle,fill] (3) at (64.495859,49.814303) {};
    \node [circle,fill] (2) at (82.287701,39.474423) {};
    \node [circle,fill] (1) at (77.601616,52.528401) {};
    \draw [black] (21) to (18);
    \draw [black] (21) to (20);
    \draw [black] (21) to (19);
    \draw [black] (20) to (16);
    \draw [black] (20) to (17);
    \draw [black] (19) to (13);
    \draw [black] (19) to (16);
    \draw [black] (18) to (13);
    \draw [black] (18) to (17);
    \draw [black] (17) to (12);
    \draw [black] (16) to (15);
    \draw [black] (16) to (11);
    \draw [black] (15) to (10);
    \draw [black] (15) to (14);
    \draw [black] (14) to (13);
    \draw [black] (14) to (9);
    \draw [black] (13) to (8);
    \draw [black] (12) to (7);
    \draw [black] (12) to (11);
    \draw [black] (12) to (8);
    \draw [black] (11) to (6);
    \draw [black] (10) to (9);
    \draw [black] (10) to (6);
    \draw [black] (9) to (5);
    \draw [black] (8) to (5);
    \draw [black] (7) to (3);
    \draw [black] (7) to (4);
    \draw [black] (6) to (2);
    \draw [black] (6) to (3);
    \draw [black] (5) to (4);
    \draw [black] (5) to (2);
    \draw [black] (4) to (1);
    \draw [black] (3) to (1);
    \draw [black] (2) to (1);
\end{tikzpicture}}
\qquad
\subfloat[]{\label{fig:planar_3conn_g5}\begin{tikzpicture}[scale=0.05, inner sep=2pt]
    \node [circle,fill] (28) at (31.519726,42.080900) {};
    \node [circle,fill] (27) at (31.519726,57.904857) {};
    \node [circle,fill] (26) at (45.862413,49.971514) {};
    \node [circle,fill] (25) at (54.109101,49.971514) {};
    \node [circle,fill] (24) at (41.845891,35.472155) {};
    \node [circle,fill] (23) at (23.928215,35.101837) {};
    \node [circle,fill] (22) at (12.163511,50.014243) {};
    \node [circle,fill] (21) at (23.970944,64.926648) {};
    \node [circle,fill] (20) at (41.845891,64.485115) {};
    \node [circle,fill] (19) at (58.068651,64.513602) {};
    \node [circle,fill] (18) at (58.154109,35.500640) {};
    \node [circle,fill] (17) at (49.992879,21.756159) {};
    \node [circle,fill] (16) at (31.676399,17.540235) {};
    \node [circle,fill] (15) at (0.000000,50.000000) {};
    \node [circle,fill] (14) at (31.719128,82.459764) {};
    \node [circle,fill] (13) at (50.007122,78.243840) {};
    \node [circle,fill] (12) at (68.437544,57.933342) {};
    \node [circle,fill] (11) at (68.466031,42.109385) {};
    \node [circle,fill] (10) at (68.309358,17.540235) {};
    \node [circle,fill] (9) at (24.996439,6.701325) {};
    \node [circle,fill] (8) at (24.996439,93.298675) {};
    \node [circle,fill] (7) at (68.309358,82.459764) {};
    \node [circle,fill] (6) at (76.029055,64.912406) {};
    \node [circle,fill] (5) at (76.043299,35.101837) {};
    \node [circle,fill] (4) at (74.989318,6.701325) {};
    \node [circle,fill] (3) at (74.989318,93.298675) {};
    \node [circle,fill] (2) at (87.822246,50.000000) {};
    \node [circle,fill] (1) at (99.999999,50.000000) {};
    \draw [black] (28) to (23);
    \draw [black] (28) to (27);
    \draw [black] (28) to (24);
    \draw [black] (27) to (21);
    \draw [black] (27) to (20);
    \draw [black] (26) to (24);
    \draw [black] (26) to (20);
    \draw [black] (26) to (25);
    \draw [black] (25) to (18);
    \draw [black] (25) to (19);
    \draw [black] (24) to (17);
    \draw [black] (23) to (22);
    \draw [black] (23) to (16);
    \draw [black] (22) to (15);
    \draw [black] (22) to (21);
    \draw [black] (21) to (14);
    \draw [black] (20) to (13);
    \draw [black] (19) to (12);
    \draw [black] (19) to (13);
    \draw [black] (18) to (17);
    \draw [black] (18) to (11);
    \draw [black] (17) to (10);
    \draw [black] (17) to (16);
    \draw [black] (16) to (9);
    \draw [black] (15) to (8);
    \draw [black] (15) to (9);
    \draw [black] (14) to (8);
    \draw [black] (14) to (13);
    \draw [black] (13) to (7);
    \draw [black] (12) to (11);
    \draw [black] (12) to (6);
    \draw [black] (11) to (5);
    \draw [black] (10) to (4);
    \draw [black] (10) to (5);
    \draw [black] (9) to (4);
    \draw [black] (8) to (3);
    \draw [black] (7) to (6);
    \draw [black] (7) to (3);
    \draw [black] (6) to (2);
    \draw [black] (5) to (2);
    \draw [black] (4) to (1);
    \draw [black] (3) to (1);
    \draw [black] (2) to (1);
\end{tikzpicture}}

    \caption{The smallest planar 3-connected platypus of girth 4 and 5. They have 21 and 28 vertices, respectively.}

\end{figure}

It was proven in~\cite{Za17} that there exists a polyhedral platypus of order $n$ for every $n \ge 25$. We now improve this theorem.

\begin{theorem}
There exists a polyhedral platypus of order $n$ for every $n \ge 21$.
\end{theorem}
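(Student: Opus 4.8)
The plan is to reduce the statement to a handful of explicit small orders together with the theorem being strengthened. By Theorem~\ref{claim:platypus_polyhedral}(ii) there is a polyhedral platypus on $21$ vertices, namely the graph of Figure~\ref{fig:planar_3conn_g4}, and by \cite[Theorem~5.3]{Za17} there is a polyhedral platypus of every order $n \ge 25$. Hence it remains only to exhibit polyhedral platypuses of orders $22$, $23$, and $24$.

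For those three orders I would first use the same computational pipeline as elsewhere in the paper: generate all planar $3$-connected graphs of the relevant order with \verb|plantri|~\cite{BM05,BM07} --- it is safe to restrict the search to girth at least $4$ first, since the $21$-vertex example already has girth $4$ --- and filter them with the two independent platypus-testing programs. A single witness of each order suffices; one can then display these three graphs as figures, exactly as was done for the $21$- and $28$-vertex graphs, leaving the routine verification of non-hamiltonicity and of traceability of the $n$ vertex-deleted subgraphs to the reader or to the programs.

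A more structural alternative is to obtain the missing orders from the $21$-vertex graph $H$ by local surgery that preserves planarity, $3$-connectedness, non-hamiltonicity, and traceability of every vertex-deleted subgraph. Natural candidates for an order-$(+1)$ step are inserting a new cubic vertex into a face of $H$ (joined to three of its vertices) or splitting a vertex of degree at least $4$ in a planarity-respecting way; for an order-$(+2)$ step one can try replacing a cubic vertex $v$ of $H$, chosen so that $H-v$ is hamiltonian, by a triangle, as in the proof of Theorem~\ref{theorem:smallest_cubic_platypus}. Should any graph produced this way contain a triangle all of whose vertices are cubic, iterating the transformation $T$ of Lemma~\ref{lemma:transformation} immediately yields an infinite family in arithmetic progression and removes the need to invoke \cite[Theorem~5.3]{Za17} at all.

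The main obstacle is that, in the polyhedral world, none of the cheap order-increasing tricks available for general platypuses is at hand: ears, subdivided spokes, dotted prisms, and Petersen prisms all drop the connectivity below $3$, and there is no known operation that provably keeps both $3$-connectedness and platypusness while changing the order by one. Consequently each of the three intermediate orders has to be settled individually, and the bulk of the work --- as for the rest of this section --- is the correctness verification of the small witnesses, which we would carry out with the two independent programs.
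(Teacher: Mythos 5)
Your proposal is correct and follows essentially the same route as the paper: the paper likewise combines the $21$-vertex graph from Theorem~\ref{claim:platypus_polyhedral}~(ii) and the $n \ge 25$ result of~\cite{Za17} with explicit computer-found witnesses for orders $22$, $23$ (shown in figures, verification left to the reader) and $24$ (taken from~\cite{Ne17}). Your observation that no cheap order-increasing operation is available in the polyhedral setting matches why the paper settles each intermediate order individually.
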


\begin{proof}
It follows from Theorem~\ref{claim:platypus_polyhedral}~(ii) that there is a polyhedral platypus of order 21. Figures~\ref{fig:planar_3conn_g4_22v} and~\ref{fig:planar_3conn_g4_23v} show a polyhedral platypus of order 22 and 23 vertices, respectively. We leave the proof that these graphs are indeed platypuses to the reader.  Figure~3.8 of~\cite{Ne17} shows a polyhedral platypus on 24 vertices. So together with the fact that there exists a polyhedral platypus of order $n$ for every $n \ge 25$ as shown in~\cite{Za17}, this proves the theorem.
\end{proof}

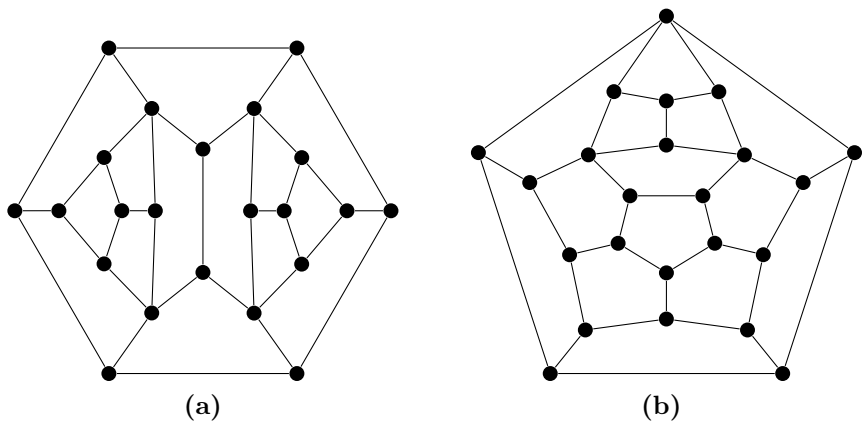
\begin{figure}[h!tb]
	\centering       	
\subfloat[]{\label{fig:planar_3conn_g4_22v}\begin{tikzpicture}[scale=0.05, inner sep=2pt]
    \definecolor{marked}{rgb}{0.25,0.5,0.25}
    \node [circle,fill] (22) at (23.738669,64.178210) {};
    \node [circle,fill] (21) at (28.441935,50.017102) {};
    \node [circle,fill] (20) at (11.715408,50.017102) {};
    \node [circle,fill] (19) at (36.411834,77.227637) {};
    \node [circle,fill] (18) at (37.352488,50.000000) {};
    \node [circle,fill] (17) at (23.721567,35.873098) {};
    \node [circle,fill] (16) at (0.000000,50.000000) {};
    \node [circle,fill] (15) at (25.004275,93.304258) {};
    \node [circle,fill] (14) at (50.008550,66.401572) {};
    \node [circle,fill] (13) at (36.394732,22.806567) {};
    \node [circle,fill] (12) at (25.004275,6.695741) {};
    \node [circle,fill] (11) at (74.995722,93.304258) {};
    \node [circle,fill] (10) at (63.622369,77.210533) {};
    \node [circle,fill] (9) at (62.681716,49.982897) {};
    \node [circle,fill] (8) at (50.008550,33.598427) {};
    \node [circle,fill] (7) at (74.995722,6.695741) {};
    \node [circle,fill] (6) at (99.999999,50.000000) {};
    \node [circle,fill] (5) at (76.278432,64.144005) {};
    \node [circle,fill] (4) at (71.609371,49.982897) {};
    \node [circle,fill] (3) at (63.605266,22.772362) {};
    \node [circle,fill] (2) at (88.301692,50.000000) {};
    \node [circle,fill] (1) at (76.261328,35.804686) {};
    \draw [black] (22) to (19);
    \draw [black] (22) to (21);
    \draw [black] (22) to (20);
    \draw [black] (21) to (17);
    \draw [black] (21) to (18);
    \draw [black] (20) to (16);
    \draw [black] (20) to (17);
    \draw [black] (19) to (14);
    \draw [black] (19) to (18);
    \draw [black] (19) to (15);
    \draw [black] (18) to (13);
    \draw [black] (17) to (13);
    \draw [black] (16) to (15);
    \draw [black] (16) to (12);
    \draw [black] (15) to (11);
    \draw [black] (14) to (8);
    \draw [black] (14) to (10);
    \draw [black] (13) to (12);
    \draw [black] (13) to (8);
    \draw [black] (12) to (7);
    \draw [black] (11) to (10);
    \draw [black] (11) to (6);
    \draw [black] (10) to (5);
    \draw [black] (10) to (9);
    \draw [black] (9) to (3);
    \draw [black] (9) to (4);
    \draw [black] (8) to (3);
    \draw [black] (7) to (6);
    \draw [black] (7) to (3);
    \draw [black] (6) to (2);
    \draw [black] (5) to (4);
    \draw [black] (5) to (2);
    \draw [black] (4) to (1);
    \draw [black] (3) to (1);
    \draw [black] (2) to (1);
\end{tikzpicture}}
\qquad
\subfloat[]{\label{fig:planar_3conn_g4_23v}
\begin{tikzpicture}[scale=0.05, inner sep=2pt]
    \definecolor{marked}{rgb}{0.25,0.5,0.25}
    \node [circle,fill] (23) at (59.680318,49.741281) {};
    \node [circle,fill] (22) at (40.305984,49.735726) {};
    \node [circle,fill] (21) at (49.964502,63.229349) {};
    \node [circle,fill] (20) at (49.977392,74.979041) {};
    \node [circle,fill] (19) at (70.727848,60.608079) {};
    \node [circle,fill] (18) at (62.845505,37.113578) {};
    \node [circle,fill] (17) at (49.990609,29.239773) {};
    \node [circle,fill] (16) at (37.141763,37.102531) {};
    \node [circle,fill] (15) at (29.246822,60.632669) {};
    \node [circle,fill] (14) at (36.035715,77.455542) {};
    \node [circle,fill] (13) at (63.921654,77.415821) {};
    \node [circle,fill] (12) at (86.340859,53.267206) {};
    \node [circle,fill] (11) at (75.748795,34.064207) {};
    \node [circle,fill] (10) at (49.979210,16.949781) {};
    \node [circle,fill] (9) at (24.243856,34.050703) {};
    \node [circle,fill] (8) at (13.645776,53.260846) {};
    \node [circle,fill] (7) at (49.993745,97.552012) {};
    \node [circle,fill] (6) at (99.999999,61.220342) {};
    \node [circle,fill] (5) at (71.545746,14.035308) {};
    \node [circle,fill] (4) at (28.433960,14.041851) {};
    \node [circle,fill] (3) at (0.000000,61.225610) {};
    \node [circle,fill] (2) at (80.900456,2.447988) {};
    \node [circle,fill] (1) at (19.098960,2.449300) {};
    \draw [black] (23) to (18);
    \draw [black] (23) to (22);
    \draw [black] (23) to (19);
    \draw [black] (22) to (15);
    \draw [black] (22) to (16);
    \draw [black] (21) to (15);
    \draw [black] (21) to (20);
    \draw [black] (21) to (19);
    \draw [black] (20) to (13);
    \draw [black] (20) to (14);
    \draw [black] (19) to (12);
    \draw [black] (19) to (13);
    \draw [black] (18) to (17);
    \draw [black] (18) to (11);
    \draw [black] (17) to (10);
    \draw [black] (17) to (16);
    \draw [black] (16) to (9);
    \draw [black] (15) to (8);
    \draw [black] (15) to (14);
    \draw [black] (14) to (7);
    \draw [black] (13) to (7);
    \draw [black] (12) to (11);
    \draw [black] (12) to (6);
    \draw [black] (11) to (5);
    \draw [black] (10) to (4);
    \draw [black] (10) to (5);
    \draw [black] (9) to (8);
    \draw [black] (9) to (4);
    \draw [black] (8) to (3);
    \draw [black] (7) to (6);
    \draw [black] (7) to (3);
    \draw [black] (6) to (2);
    \draw [black] (5) to (2);
    \draw [black] (4) to (1);
    \draw [black] (3) to (1);
    \draw [black] (2) to (1);
\end{tikzpicture}}
    \caption{A planar 3-connected platypus on 22 and 23 vertices, respectively.}

\end{figure}







%

\section*{Acknowledgements}
Most computations for this work were carried out using the Stevin Supercomputer Infrastructure at Ghent University. We would also like to thank Gunnar Brinkmann for his advice on \verb|plantri|.


\section*{References}


\end{document}